\newtheorem{definition}{Definition}
\newtheorem{proposition}{Proposition}
\newtheorem{theorem}{Theorem}
\newtheorem{proof}{Proof}
\newtheorem{Remark}{Remark}
\journal{JCAM}
\begin{document}

\begin{frontmatter}

\title{Einstien-Multidimensional Extrapolation methods}




\author[mymainaddress]{A. H. Bentbib}
\ead{a.bentbib@uca.ac.ma}
\author[mysecondaryaddress]{K. Jbilou}
\ead{khalide.jbilou@univ-littoral.fr}
\author[mymainaddress]{R. Tahiri\corref{mycorrespondingauthor},\fnref{myfootnote}}
\cortext[mycorrespondingauthor]{Corresponding author}
\fntext[myfootnote]{The third author contribution is major in this work}
\ead{ridwane.tahiri@ced.uca.ma}

\address[mymainaddress]{Faculty of Science and Technology Marrakech, University Cadi Ayyad, BP 549, 42 000 Marrakech, Morocco}
\address[mysecondaryaddress]{Universit\'e du Littoral Cote d'Opale, LMPA, 50 rue 
	F. Buisson, 62228 Calais-Cedex, France and University Mohammed VI, Benguerirr, Morocco}

\begin{abstract}
In this paper, we   present a new framework for   the recent  multidimensional extrapolation methods:  Tensor Global Minimal Polynomial (TG-MPE) and Tensor Global Reduced Rank Extrapolation (TG-RRE) methods. We develop  a  new  approach  to the   one presented  in \cite{17}. The proposed framework highlights, in addition their  polynomial feature, the connection of TG-MPE and TG-RRE with nonlinear Krylov subspace methods. A unified algorithm is proposed for their implemention. Theoretical results are given and some numerical experiments on linear and nonlinear problems are considered to confirm the performance of the proposed algorithms.
\end{abstract}

\begin{keyword}
Extrapolation methods \sep Multilinear\sep Tenso\sep, Einstein product\sep Krylov subspace method
\end{keyword}

\end{frontmatter}


\section{Introduction}\label{sec1}
Extrapolation methods \cite{1,20,21,22,23,24, 7, 10, 34} are useful tools for speeding up the convergence rate of sequences , they transform the basic iterates to a new sequence
that converges faster to the same limit of the initial sequence. Unfortunately,
there is no extrapolation method that can accelerate the convergence of all
sequences, and each method concern just a limited class of sequences, hence the
necessity of adapting or building new extrapolation methods that are suitable
for each type and class of sequences. For vector and matrix sequences, There
are two categories of extrapolation methods: Polynomial methods such that
the minimal polynomial extrapolation method (MPE) proposed by Cabay
and Jackson \cite{10}, the reduced rank extrapolation method (RRE) introduced
by Kaniel and Stein  \cite{25} and Mesina \cite{27}, and the modified minimal polynomial extrapolation method (MMPE) of Brezinski \cite{6}, Pugachev \cite{30}, Sidi,
Ford, and Smith \cite{35}. Epsilon algorithms like the vector epsilon algorithm
(VEA) of Wynn \cite{36} which is a vectorization of the scalar epsilon algorithm
(SEA) proposed by the same author in \cite{37} (which is a recursive procedure for
implementing the transformation of Shanks \cite{33}), and the topological epsilon
algorithm (TEA) of Brezinski \cite{5}. For accelerating multidimensional sequences,
that interest us in this work, Tensor Global Minimal Polynomial Extrapolation method (TG-MPE) and Tensor Global Reduced Rank Extrapolation
method (TG-RRE) are the first two Extrapolation methods presented in \cite{17}
for this purpose. They are introduced as projection methods that are, when
applied to linear iterative process, equivalent to the Krylov subspace methods Arnoldi and GMRES. As they are a generalizations of the well known
polynomial extrapolation methods MPE (Minimal Polynomial Extrapolation
) and RRE (Reduced Rank Extrapolation), they might inherit the polynomial
feature, in other words, they can be presented in the context of polynomial
methods. The aim of this paper is to provide a suitable framework that reveals
the polynomial type of these methods, as well as, justify their nonlinear Krylov conterpart when applied to nonlinear problems.\\ 
\noindent The next section is devoted for some preliminaries ans basic properties
about tensors. In  Section 3, starting from a class of linear iterative
sequences, and by the use of the generalized notion of minimal polynomial,
we expose the adopted polynomial approach to determine the limits of this kind  of
sequences. Section 4 provides, via least-squares problems, the definitions of
TG-MPE and TG-RRE while Section 5 is devoted to a unified algorithm to
implement them. In Section 6, we have highlighted the Krylov type counterpart of TG-MPE and TG-RRE, and explain how can be thought as nonlinear Krylov subspace methods when we applied them to nonlinear sequences.  In the last Section, we gpresent some numerical tests  that confirm the feasibility and effectiveness of the proposed approaches.
\section{Preliminaries and Notations}
In this section, we summarize some of the basic tools about tensors and their computations that will be used in the remainder of this paper.

\begin{definition}(\cite{4})
	
	A tensor is a multidimensional array  whose elements are referred by using multiple indices. The number of indices ( modes or ways )  is called the order of the tensor. For a given  $N$-order tensor $\mathcal{A} \in \mathbb{R}^{I_{1} \times I_{2} \times I_{3} \times \cdots \times I_{N}}$ we use the following  notation
	
	\begin{equation}\label{Eq1}
		\mathcal{A}=\left(\mathcal{A}_{i_{1} i_{2} \ldots i_{n-1} i_{n} i_{n+1} \ldots i_{N}}\right)_{1 \leq i_{n} \leq I_{n} ; 1 \leq n \leq N},
	\end{equation}
	where $\mathcal{A}_{i_{1} i_{2} \ldots i_{n-1} i_{n} i_{n+1} \ldots i_{N}}$ are the entries of $\mathcal{A}$. 
\end{definition}

\noindent For a square tensor  $\mathcal{A} \in \mathbb{R}^{I_{1} \times \ldots \times I_{N} \times I_{1} \times \ldots \times I_{N}}$, the trace of $\mathcal{A}$ is the scalar given by

\begin{equation}\label{Eq2}
	\operatorname{tr}(\mathcal{A})=\sum_{i_{1} \ldots i_{N}} \mathcal{A}_{i_{1}, \ldots i_{N} i_{1} \ldots i_{N}}.
\end{equation}
For a tensor $\mathcal{A} \in \mathbb{R}^{I_{1} \times \ldots \times I_{N} \times J_{1} \times \cdots \times J_{M}}$, the transpose of $\mathcal{A}$ is the tensor $\mathcal{A}^{T}$ of size $J_{1} \times \cdots \times J_{M} \times I_{1} \times \ldots \times I_{N}$ whose elements are given by 

\begin{equation}\label{Eq3}
	\left(\mathcal{A}^{T}\right)_{j_{1} \ldots j_{M} i_{1}, \ldots i_{N}}=\mathcal{A}_{i_{1}, \ldots i_{N} j_{1} \ldots j_{M}}.
\end{equation}

\begin{definition} (\cite{4}). The Einstein product of two tensors $\mathcal{A} \in \mathbb{R}^{I_{1} \times \ldots \times I_{N} \times J_{1} \times \cdots \times J_{M}}$ and $\mathcal{B} \in \mathbb{R}^{J_{1} \times \cdots \times J_{M} \times K_{1} \times \cdots \times K_{L}}$, is the $I_{1} \times \ldots \times I_{N} \times K_{1} \times \cdots \times K_{L}$ tensor denoted by $\mathcal{A} *_{M} \mathcal{B}$ whose entries are given by
	
	\begin{equation}\label{Eq4}
		\left(\mathcal{A} *_{M} \mathcal{B}\right)_{i_{1}, \ldots i_{N} k_{1}, \ldots k_{M}}=\sum_{j_{1} \ldots j_{M}} \mathcal{A}_{i_{1}, \ldots i_{N} j_{1} \ldots j_{M}} \mathcal{B}_{j_{1} \ldots j_{M} k_{1}, \ldots k_{L}}.
	\end{equation}
\end{definition} 
\begin{definition} (\cite{4}) Let $\mathcal{A} \in \mathbb{R}^{J_{1} \times \cdots \times J_{M} \times J_{1} \times \cdots \times J_{M}}$ and let  
	$\mathcal{I}=\left[\delta_{i_{1} \ldots i_{N} j_{1} \ldots j_{N}}\right]$ denotes the identity tensor whose elements are as	
	\begin{equation}\label{Eq6}
		\delta_{i_{1} \ldots i_{N} j_{1} \ldots j_{N}}=\prod_{k=1}^{N} \delta_{i_{k} j_{k}} \text { with } \delta_{i_{k} j_{k}}=1 \text { if } i_{k}=j_{k} \text { and } 0 \text { else. }
	\end{equation}
	If there exists  a tensor $\mathcal{B} \in$ $\mathbb{R}^{J_{1} \times \cdots \times J_{M} \times J_{1} \times \cdots \times J_{M}}$ such that
	\begin{equation}\label{Eq5}
		\mathcal{A} *_M \mathcal{B}=\mathcal{B} *_M \mathcal{A}=\mathcal{I}, 
	\end{equation}
	then $\mathcal{A}$ is said to be invertible, and $\mathcal{B}$ is called the inverse of $\mathcal{A}$, denoted as $\mathcal{A}^{-1}$. 
\end{definition} 

\begin{definition}  (Inner product of two tensors (\cite{14})). Let $\mathcal{A}$ and $\mathcal{B}$ two tensors of the same size $I_{1} \times I_{2} \times \cdots \times I_{N} \times I_{1} \times I_{2} \times \cdots \times I_{N}$, the (Frobenious) inner product of $\mathcal{A}$ and $\mathcal{B}$ is the scalar defined as
	\begin{equation}\label{Eq7}
		\langle\mathcal{A}, \mathcal{B}\rangle=\operatorname{tr}\left(\mathcal{A} *_{N} \mathcal{B}\right)=\sum_{i_{N}=1}^{I_{N}} \cdots \sum_{i_{1}=1}^{I_{1}} \mathcal{A}_{i_{1} \ldots i_{N}} \mathcal{B}_{i_{1} \ldots i_{N}}
	\end{equation}
	leading to the tensor norm
	\begin{equation}\label{Eq8}
		\|\mathcal{A}\|_{F}=\sqrt{\langle\mathcal{A}, \mathcal{A}\rangle}.
	\end{equation}
\end{definition}
\begin{definition}  \cite{26} Given a tensor $\mathcal{A} \in \mathbb{R}^{I_{1} \times I_{2} \times I_{3} \times \cdots \times I_{N}}$ and a matrix $M \in \mathbb{R}^{J_{n} \times I_{n}}$. Then the $n$-mode product of the tensor $\mathcal{A}$ and $M$, denoted by $\mathcal{A} \times{ }_{n} M$, is the tensor of size  $\left(I_{1} \times I_{2} \ldots \times I_{n-1} \times J_{n} \times\right.$ $\left.I_{n+1} \times \cdots \times I_{N}\right)$ defined by
	
	\begin{equation}\label{Eq9}
		\left(\mathcal{A} \times_{n} M\right)_{i_{1} i_{2} \ldots i_{n-1} j_{n} i_{n+1} \ldots i_{N}}=\sum_{i_{n}} \mathcal{A}_{i_{1} i_{2} \ldots i_{n-1} i_{n} i_{n+1} \ldots i_{N}} M_{j_{n} i_{n}}. 
	\end{equation}
	The $n$-mode product of the tensor $\mathcal{A}$ and a vector $w \in \mathbb{R}^{I_{n}}$, denoted by $\mathcal{A} \bar{\times}_{n} w$ is the subtensor of order $\left(I_{1} \times I_{2} \ldots \times I_{n-1} \times I_{n+1} \times \cdots \times I_{N}\right)$ defined as
	
	\begin{equation}\label{Eq10}
		\left(\mathcal{A} \bar{\times}_{n} w\right)_{i_{1} i_{2} \ldots i_{n-1} i_{n+1} \ldots i_{N}}=\sum_{i_{n}=1}^{I_{n}} \mathcal{A}_{i_{1} i_{2} \ldots i_{n-1} i_{n} i_{n+1} \ldots i_{N}} w_{i_{n}}.
	\end{equation}
\end{definition} 

\begin{definition}  (\cite{4})Let $I=I_{1} I_{2} \ldots I_{N}, J=J_{1} J_{2} \ldots J_{M}$ 
	and define  the function from the space of tensors onto the space of matrices
	\begin{equation}\label{Eq11}
		\begin{aligned}
			\phi_{I J}: \mathbb{R}^{I_{1} \times \ldots \times I_{N} \times J_{1} \times \cdots \times J_{M}} & \longrightarrow \mathbb{R}^{I \times J}, \\
			\mathcal{A} & \longmapsto \phi_{I J}(\mathcal{A})=A, 
		\end{aligned}
	\end{equation}
	such that the components of the matrix $A$ are given by
	
	\begin{equation}\label{Eq12}
		A_{p q}=\mathcal{A}_{i_{1}, \ldots i_{N} j_{1} \ldots j_{M}}, 
	\end{equation}
	with $$p=i_{N}+\sum_{k=1}^{N-1}\left(\left(i_{k}-1\right) \prod_{m=k+1}^{N} I_{m}\right)$$ and $$q=j_{M}+\sum_{k=1}^{N-1}\left(\left(j_{k}-1\right) \prod_{m=k+1}^{M} J_{m}\right).$$
\end{definition} 
\noindent In the case where $\mathcal{A} \in \mathbb{R}^{I_{1} \times \ldots \times I_{N}},$ its image $\phi_{I J}(\mathcal{A})=\phi_{I 1}(\mathcal{A})$ is a vector of $\mathbb{R}^{I} \quad(\mathrm{~J}=1)$.
\begin{proposition}(\cite{14})
Let  $\mathbf{M}_{I I}$ be the group of all invertible $I \times I$ matrices, and let

\begin{equation}\label{Eq13}
	\mathbf{T}=\left\{\mathcal{T} \in \mathbb{R}^{I_{1} \times \ldots \times I_{N} \times I_{1} \times \ldots \times I_{N}}\; \text{such that}\;  \phi_{I I}(\mathcal{T})\;  \text{is nonsingular} \right\}.
\end{equation}
\noindent Then, the restriction of the function ${\phi_{I I}}^T$  on $\mathbf{T}$ :
$\left(\mathbf{T}, *_{N}\right) \longrightarrow(\mathbf{M_{II}}, .)$
is a group isomorphisme. 
\end{proposition}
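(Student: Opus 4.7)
The plan is to verify the three ingredients that make $\phi_{II}\big|_{\mathbf{T}}$ a group isomorphism: (i) it is well-defined as a map into $\mathbf{M}_{II}$, (ii) it is a bijection, and (iii) it is a homomorphism carrying $*_N$ to matrix multiplication. Ingredients (i) and (ii) are essentially built into the definition of $\mathbf{T}$ and of $\phi_{IJ}$: the defining condition of $\mathbf{T}$ is exactly that $\phi_{II}(\mathcal{T})$ be invertible, and $\phi_{IJ}$ is already a bijection from $\mathbb{R}^{I_1\times\cdots\times I_N\times J_1\times\cdots\times J_M}$ to $\mathbb{R}^{I\times J}$ because the index maps $(i_1,\ldots,i_N)\mapsto p$ and $(j_1,\ldots,j_M)\mapsto q$ defined in \eqref{Eq12} are bijections onto $\{1,\ldots,I\}$ and $\{1,\ldots,J\}$ respectively (they are precisely the ``lexicographic base'' encodings). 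The inverse simply inverts this encoding, and it clearly sends invertible matrices back into $\mathbf{T}$.

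The substantive step is (iii): for $\mathcal{A},\mathcal{B}\in\mathbf{T}$, one must show
\begin{equation*}
\phi_{II}(\mathcal{A}*_N\mathcal{B})=\phi_{II}(\mathcal{A})\cdot\phi_{II}(\mathcal{B}).
\end{equation*}
First I would denote $A=\phi_{II}(\mathcal{A})$, $B=\phi_{II}(\mathcal{B})$ and fix multi-indices $(i_1,\ldots,i_N)$ and $(k_1,\ldots,k_N)$ encoded to $p$ and $q$ via \eqref{Eq12}. Expanding the left-hand side with the Einstein product formula \eqref{Eq4} gives
\begin{equation*}
\phi_{II}(\mathcal{A}*_N\mathcal{B})_{pq}=\sum_{j_1,\ldots,j_N}\mathcal{A}_{i_1\ldots i_N j_1\ldots j_N}\mathcal{B}_{j_1\ldots j_N k_1\ldots k_N}.
\end{equation*}
The encoding $(j_1,\ldots,j_N)\mapsto r:=j_N+\sum_{k=1}^{N-1}(j_k-1)\prod_{m=k+1}^{N}I_m$ is a bijection onto $\{1,\ldots,I\}$, so the summation can be reindexed by $r$, and by \eqref{Eq12} applied to $\mathcal{A}$ and to $\mathcal{B}$, the summand is exactly $A_{pr}B_{rq}$. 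Hence the right-hand side equals $\sum_{r=1}^{I}A_{pr}B_{rq}=(AB)_{pq}$, which establishes the homomorphism property.

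Finally, I would wrap up by noting consistency with the identity: $\phi_{II}(\mathcal{I})$ is the $I\times I$ identity matrix (direct from \eqref{Eq6}), and combining (iii) with the definition of $\mathbf{T}$ shows that $\mathcal{A}\in\mathbf{T}$ admits an inverse $\mathcal{A}^{-1}$ in the sense of \eqref{Eq5} whose image is $A^{-1}$; hence $\phi_{II}$ really does restrict to a bijection $\mathbf{T}\to\mathbf{M}_{II}$ compatible with the two group laws. I expect the only delicate point to be bookkeeping the three simultaneous index encodings $p,q,r$ and checking they are mutually consistent; once that is laid out cleanly, the rest is immediate.
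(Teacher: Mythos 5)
The paper itself offers no proof of this proposition: it is quoted from reference \cite{14} and stated without argument, so there is nothing to compare against line by line. Your proposal is correct and self-contained. The three ingredients you isolate are the right ones, and the only substantive computation --- that $\phi_{II}(\mathcal{A}*_N\mathcal{B})_{pq}=\sum_{r}A_{pr}B_{rq}$ after reindexing the inner sum over $(j_1,\ldots,j_N)$ by its lexicographic encoding $r$ --- is exactly the content of the paper's own equation \eqref{Eq15} (the second cited proposition), which you have in effect proved as well rather than invoked. Two points worth tightening if you write this up: state explicitly that closure of $\mathbf{T}$ under $*_N$ follows from the homomorphism identity (since $AB$ is invertible whenever $A$ and $B$ are), so that $(\mathbf{T},*_N)$ really is a group and the word ``isomorphism'' is licensed; and note that the middle encoding $r$ agrees for the column indices of $\mathcal{A}$ and the row indices of $\mathcal{B}$ precisely because both run over the same dimensions $I_1,\ldots,I_N$ --- the bookkeeping concern you flag at the end is real but resolves immediately for square tensors.
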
 
\begin{proposition}(\cite{14})
	Let $\mathcal{A} \in \mathbb{R}^{I_{1} \times \cdots \times I_{N} \times J_{1} \times \cdots \times J_{M}}$, $\mathcal{B} \in \mathbb{R}^{J_{1} \times \cdots \times J_{M} \times K_{1} \times \cdots \times K_{L}}$, and $\mathcal{C} \in \mathbb{R}^{I_{1} \times \ldots \times I_{N} \times K_{1} \times \cdots \times K_{L}}$. Then 
	\begin{equation}\label{Eq15}
		\mathcal{A} *_{M} \mathcal{B}=\mathcal{C} \Longleftrightarrow \phi_{I J}(\mathcal{A}) \phi_{J K}(\mathcal{B})=\phi_{I K}(\mathcal{C}).
	\end{equation}
\end{proposition}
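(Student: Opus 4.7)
The plan is to prove the equivalence entrywise, exploiting the fact that each of $\phi_{IJ}$, $\phi_{JK}$, and $\phi_{IK}$ is a vector-space isomorphism onto the corresponding matrix space. It suffices to show that for every pair of linear indices $(p,r)\in\{1,\ldots,I\}\times\{1,\ldots,K\}$, the entry $[\phi_{IK}(\mathcal{A}*_M\mathcal{B})]_{pr}$ coincides with the entry $[\phi_{IJ}(\mathcal{A})\,\phi_{JK}(\mathcal{B})]_{pr}$. Since $\phi_{IK}$ is a bijection, equality of all corresponding entries in the matrix world is equivalent to equality of the underlying tensors, so both directions of the claim follow simultaneously from this single entrywise identity.

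First I would fix $(p,r)$ and recover, via the encoding rules in \eqref{Eq12}, the unique multi-indices $(i_1,\ldots,i_N)$ associated with $p$ and $(k_1,\ldots,k_L)$ associated with $r$. By the definition of $\phi_{IK}$, the left-hand entry is $(\mathcal{A}*_M\mathcal{B})_{i_1\ldots i_N k_1\ldots k_L}$, which by \eqref{Eq4} expands to $\sum_{j_1,\ldots,j_M}\mathcal{A}_{i_1\ldots i_N j_1\ldots j_M}\mathcal{B}_{j_1\ldots j_M k_1\ldots k_L}$. Next, I would expand the right-hand side as an ordinary matrix product: $[\phi_{IJ}(\mathcal{A})\,\phi_{JK}(\mathcal{B})]_{pr}=\sum_{q=1}^{J}[\phi_{IJ}(\mathcal{A})]_{pq}[\phi_{JK}(\mathcal{B})]_{qr}$, and substitute the tensor values $\mathcal{A}_{i_1\ldots i_N j_1\ldots j_M}$ and $\mathcal{B}_{j_1\ldots j_M k_1\ldots k_L}$ supplied by the definition of $\phi$, where $(j_1,\ldots,j_M)$ is the multi-index assigned to $q$.

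The crux is then to observe that the formula $q=j_M+\sum_{k=1}^{M-1}(j_k-1)\prod_{m=k+1}^{M}J_m$ is exactly the mixed-radix (lexicographic) encoding of $\prod_{k=1}^{M}\{1,\ldots,J_k\}$ onto $\{1,\ldots,J\}$, and is therefore a bijection. Under this bijection, the single sum $\sum_{q=1}^{J}$ in the matrix product translates term-by-term into the multi-sum $\sum_{j_1,\ldots,j_M}$ appearing in the Einstein product, and each summand coincides. This gives the required entrywise equality, and the equivalence follows.

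The main obstacle I expect is nothing conceptually deep, but rather the careful, error-free handling of this index bookkeeping: verifying that the same $(j_1,\ldots,j_M)$ is selected on both sides when $q$ is fixed, and that the index ranges agree. Once the bijection is in hand, the proof collapses to matching definitions, and both the ``$\Rightarrow$'' and ``$\Leftarrow$'' implications are delivered at once by the injectivity of $\phi_{IK}$ already established in the preceding proposition.
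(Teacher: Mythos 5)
The paper gives no proof of this proposition at all --- it is imported verbatim from the cited reference \cite{14} --- so there is no in-paper argument to compare against; judged on its own terms, your proof is correct and is the standard one. Reducing both implications to the single matrix identity $\phi_{IK}(\mathcal{A}*_M\mathcal{B})=\phi_{IJ}(\mathcal{A})\,\phi_{JK}(\mathcal{B})$ and then invoking the bijectivity of $\phi_{IK}$ is exactly the right move, and that identity does come down to the observation that $q=j_M+\sum_{k=1}^{M-1}\bigl((j_k-1)\prod_{m=k+1}^{M}J_m\bigr)$ is the mixed-radix bijection from $\prod_{k}\{1,\ldots,J_k\}$ onto $\{1,\ldots,J\}$, so the single sum over $q$ reindexes the multi-sum over $(j_1,\ldots,j_M)$ term by term (note that you have silently, and correctly, repaired the paper's typo, which writes the upper limit of that sum as $N-1$ instead of $M-1$). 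The one claim I would soften is that the injectivity of $\phi_{IK}$ is ``already established in the preceding proposition'': that proposition only states a group isomorphism for the restriction of $\phi_{II}$ to invertible \emph{square} tensors, whereas you need bijectivity of the rectangular flattening $\phi_{IK}$ on the full linear space $\mathbb{R}^{I_1\times\cdots\times I_N\times K_1\times\cdots\times K_L}$. This is immediate from the same index-bijection argument you already use (distinct multi-indices map to distinct matrix positions, and every position is hit), so it costs one sentence, but it should be stated rather than attributed to the earlier result.
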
 

\begin{definition} (\cite{14}) Let $\mathcal{A} \in \mathbb{R}^{I_{1} \times \ldots \times I_{N} \times I_{1} \times \ldots \times I_{N}}$ be a square tensor. If there exists a nonzero tensor $\mathcal{X} \in \mathbb{R}^{I_{1} \times \ldots \times I_{N}}$ and a scalar $\lambda$ such that
	\begin{equation}\label{Eq16}
		\mathcal{A} *_{N} \mathcal{X}=\lambda \mathcal{X}, 
	\end{equation}
	then $\lambda$ is called an eigenvalue of $\mathcal{A}, \mathcal{X}$ is called an eigentensor of $\mathcal{A}$ corresponding to $\lambda$. The set of all eigenvalues of $\mathcal{A}$ is denoted as $\sigma(\mathcal{A})$.
\end{definition}
\noindent In view of (\ref{Eq15}), we remark that $\lambda$ is an eigenvalue of the matrix $\phi_{I I}(\mathcal{A})$ associated to the eigenvector $\phi_{I 1}(\mathcal{X}),$   wich gives 
\begin{equation}\label{Eq17}
	\sigma(\mathcal{A})=\sigma_{c}\left(\phi_{I I}(\mathcal{A})\right),
\end{equation}
with $\sigma_{c}\left(M\right)$ stands to the classical spectrum of a square matrix $M$.
The  spectral radius  of the square tensor $\mathcal{A}$ is denoted by 
\begin{equation}\label{Eq18}
	\rho(\mathcal{A})=\max_{\lambda \in \sigma(\mathcal{A})}\mid\lambda\mid.
\end{equation}
\noindent Similarly to the matrix case, we can define the characteristic polynomial of a  square tensor.
\begin{definition} Let $\mathcal{A} \in \mathbb{R}^{I_{1} \times \ldots \times I_{N} \times I_{1} \times \ldots \times I_{N}}$ be a given  square tensor. We define the characteristic polynomial of $\mathcal{A}$ by 
	\begin{equation}\label{Eq19}
		{\mathcal P}_{\mathcal A}(\lambda)=\prod_{\lambda_{i} \in \sigma(\mathcal{A})}\left(\lambda_i-\lambda \right).
	\end{equation}
\end{definition}
\noindent Denote $\mathcal{A}^{k}=\mathcal{A} *_{N} \mathcal{A}^{k-1}$ with $\mathcal{A}^{0}=\mathcal{I}$. We state the following result that extend to tensor form the the classical Cayley-Hamilton theorem. Recall that $I=I_{1} I_{2} \ldots I_{N}$.

\begin{theorem}
	Let ${\mathcal P}_{\mathcal A}(\lambda)=\sum_{k=0}^{I} c_{k} \lambda^{k}$ the characteristic polynomial of $\mathcal{A} \in$ $\mathbb{R}^{I_{1} \times \ldots \times I_{N} \times I_{1} \times \ldots \times I_{N}}$ then,
	
	\begin{equation}\label{Eq20}
		{\mathcal P}_{\mathcal A}(\mathcal{A})=\sum_{k=0}^{I} c_{k} \mathcal{A}^{k}=\mathcal{O}.
	\end{equation}
\end{theorem}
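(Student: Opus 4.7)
The natural strategy is to transport the statement through the isomorphism $\phi_{II}$ of Proposition~1 and reduce to the classical Cayley--Hamilton theorem for matrices. The plan is to avoid any direct combinatorial manipulation of tensor entries and instead exploit the fact that $\phi_{II}$ converts the Einstein product $*_N$ into ordinary matrix multiplication.

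First, I would set $A=\phi_{II}(\mathcal{A})\in\mathbb{R}^{I\times I}$. An immediate induction on $k$, using Proposition~2 (with $\mathcal{B}=\mathcal{A}^{k-1}$), shows that
\begin{equation*}
\phi_{II}(\mathcal{A}^{k})=A^{k}\quad\text{for every }k\ge 0,
\end{equation*}
with the base case $\phi_{II}(\mathcal{I})=I_{I}$ (the $I\times I$ identity matrix), which is a direct consequence of the definition of $\mathcal{I}$ in~(\ref{Eq6}) and of the indexing map~(\ref{Eq12}). Combined with the linearity of $\phi_{II}$, this yields, for any polynomial $P(\lambda)=\sum_{k=0}^{I}c_{k}\lambda^{k}$,
\begin{equation*}
\phi_{II}\bigl(P(\mathcal{A})\bigr)=\sum_{k=0}^{I}c_{k}A^{k}=P(A).
\end{equation*}

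Next I would identify the characteristic polynomials on both sides. By~(\ref{Eq17}), $\sigma(\mathcal{A})=\sigma_{c}(A)$ as multisets (the eigentensor equation $\mathcal{A}*_{N}\mathcal{X}=\lambda\mathcal{X}$ corresponds via $\phi_{I1}$ exactly to the eigenvector equation for $A$, and algebraic multiplicities are preserved since $\phi_{II}$ is a linear bijection that intertwines the two products). Therefore the polynomial $\mathcal{P}_{\mathcal{A}}(\lambda)=\prod_{\lambda_{i}\in\sigma(\mathcal{A})}(\lambda_{i}-\lambda)$, interpreted with algebraic multiplicities so as to have degree $I$, coincides, up to the sign $(-1)^{I}$ that is already absorbed in the convention of Definition~8, with the classical characteristic polynomial of $A$.

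Finally I would apply the classical Cayley--Hamilton theorem to the matrix $A$, giving $\mathcal{P}_{\mathcal{A}}(A)=0$ in $\mathbb{R}^{I\times I}$. Pulling back through $\phi_{II}^{-1}$, which exists by Proposition~1, yields
\begin{equation*}
\mathcal{P}_{\mathcal{A}}(\mathcal{A})=\phi_{II}^{-1}\bigl(\mathcal{P}_{\mathcal{A}}(A)\bigr)=\phi_{II}^{-1}(0)=\mathcal{O},
\end{equation*}
which is the claim. The only genuinely delicate point is the bookkeeping in the second step: one must be careful that $\mathcal{P}_{\mathcal{A}}$ as written in~(\ref{Eq19}) really has degree $I$ (i.e.\ the eigenvalues are listed with multiplicities matching those of $A$), since otherwise the degree-$I$ expansion $\sum_{k=0}^{I}c_{k}\lambda^{k}$ stated in the theorem would be inconsistent. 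Once this convention is fixed, everything else is a transparent transport of structure along $\phi_{II}$.
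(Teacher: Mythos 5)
Your proposal is correct and follows essentially the same route as the paper: transport the identity through the isomorphism $\phi_{II}$, use $\phi_{II}(\mathcal{A}^{k})=[\phi_{II}(\mathcal{A})]^{k}$ together with linearity to reduce to the classical Cayley--Hamilton theorem for $\phi_{II}(\mathcal{A})$, and conclude by injectivity of $\phi_{II}$. You simply spell out the details the paper leaves implicit (the induction via Proposition~2, the identification of spectra through~(\ref{Eq17}), and the multiplicity convention in Definition~8).
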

\begin{proof} 
	Using the fact that  $\phi_{I I}\left(\mathcal{A}^{k}\right)=\left[\phi_{I I}(\mathcal{A})\right]^{k}$, we get the following relations\\
	$\phi_{I I}(\mathcal{P}_\mathcal{A}(\mathcal{A}))=\phi_{I I}\left(\sum_{k=0}^{I} c_{k} \mathcal{A}^{k}\right)=\sum_{k=0}^{I} c_{k} \phi_{I I}\left(\mathcal{A}^{k}\right)=\mathcal{P}_\mathcal{A}\left(\phi_{I I}(\mathcal{A})\right)=0$, and since $\phi_{I I}$ is injective, it follows  ${\mathcal P}_{\mathcal A}(\mathcal{A})=\mathcal{O}$. 
\end{proof} 
\medskip 

\begin{definition} \label{def10}  
	Let $\mathcal{A} \in \mathbb{R}^{I_{1} \times \ldots \times I_{N} \times I_{1} \times \ldots \times I_{N}}$ and a nonzero tensor $\mathcal{X} \in \mathbb{R}^{I_{1} \times \ldots \times I_{N}}$. The minimal polynomial of $\mathcal{A}$ (respectively with respect to $\mathcal{X}$ ) is the polynomial, denoted by $\bar{\mathcal{P}}_\mathcal{A}$ (respectively  $\bar{\mathcal{P}}_\mathcal{A}^{[\mathcal{X}]})$, with smallest degree such that $\bar{\mathcal{P}}(\mathcal{A})=\mathcal{O}$ (respectively  $\bar{\mathcal{P}}_\mathcal{A}^{[\mathcal{X}]}(\mathcal{A}) *_{N}$ $\mathcal{X}=\mathcal{O}).$
\end{definition}
\medskip 
\begin{theorem}
	The minimal polynomial $\bar{\mathcal{P}}_\mathcal{A}^{[\mathcal{X}]}$ of the tensor $\mathcal{A}$ with respect to $\mathcal{X}$ exists and is unique.
\end{theorem}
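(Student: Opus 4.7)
The plan is to mimic the familiar matrix argument, using the tensor Cayley--Hamilton theorem established just above together with the well-ordering principle on $\mathbb{N}$. The two pieces are nearly independent: Cayley--Hamilton furnishes at least one annihilating polynomial (thereby guaranteeing existence), and a routine subtraction trick applied at the minimal degree gives uniqueness. As an alternative perspective, the algebra isomorphism $\phi_{II}$ of the earlier Proposition reduces both claims to the classical statements about the minimal polynomial of a matrix with respect to a vector, so one could equivalently transport the result from the matrix setting.

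For existence, I would first invoke the tensor Cayley--Hamilton theorem to conclude $\mathcal{P}_{\mathcal{A}}(\mathcal{A})=\mathcal{O}$, hence $\mathcal{P}_{\mathcal{A}}(\mathcal{A}) *_N \mathcal{X} = \mathcal{O}$ for every $\mathcal{X}$. Therefore the set
\[
S = \{\, P \in \mathbb{R}[\lambda]\setminus\{0\} : P(\mathcal{A}) *_N \mathcal{X} = \mathcal{O}\,\}
\]
is nonempty. The subset $\{\deg P : P \in S\} \subset \mathbb{N}$ is nonempty, so well-ordering yields a minimum $m$. Picking any $P \in S$ with $\deg P = m$ and dividing by its leading coefficient produces a monic polynomial of degree $m$ annihilating $\mathcal{X}$ relative to $\mathcal{A}$, which is our candidate $\bar{\mathcal{P}}_{\mathcal{A}}^{[\mathcal{X}]}$.

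For uniqueness, suppose $P_1$ and $P_2$ are both monic of degree $m$ with $P_i(\mathcal{A}) *_N \mathcal{X} = \mathcal{O}$. Set $Q = P_1 - P_2$. Since the monic leading terms cancel, $\deg Q < m$. By the bilinearity of the Einstein product over tensor addition, one has
\[
Q(\mathcal{A}) *_N \mathcal{X} = P_1(\mathcal{A}) *_N \mathcal{X} - P_2(\mathcal{A}) *_N \mathcal{X} = \mathcal{O}.
\]
If $Q \neq 0$, rescaling would place $Q$ in $S$ with degree strictly less than $m$, contradicting the minimality of $m$. Hence $P_1 = P_2$.

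The only subtle bookkeeping is to verify that polynomial arithmetic interacts correctly with $*_N$ (distributivity of the Einstein product over tensor addition, and the identity $\phi_{II}(\mathcal{A}^k)=\phi_{II}(\mathcal{A})^k$ already used in the Cayley--Hamilton proof), and to dispose of the degenerate case $m=0$: a nonzero constant polynomial $c$ gives $c\mathcal{I} *_N \mathcal{X} = c\mathcal{X} \neq \mathcal{O}$ since $\mathcal{X}\neq \mathcal{O}$, forcing $m \geq 1$. These are all straightforward, so I do not expect any genuine obstacle; the whole argument is essentially a transport of the standard matrix proof through the isomorphism $\phi_{II}$.
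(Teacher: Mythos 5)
Your argument is correct and is exactly the route the paper intends: the paper's proof consists only of the remark that ``the proof is similar to the matrix case,'' and what you write out (existence from the tensor Cayley--Hamilton theorem plus well-ordering, uniqueness by the monic subtraction trick, with the degenerate degree-zero case excluded because $\mathcal{X}\neq\mathcal{O}$) is precisely that matrix argument transported through $\phi_{II}$. No gaps; you have simply supplied the details the paper omits.
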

\medskip
\begin{proof}
	The proof is similar to the matrix case.
\end{proof} 

\section{Determination of limit via minimal polynomial}
In this section we will be interested in a class of tensor sequences generated by the (multi)linear process given by (\ref{Eq21}). 
We will demonstrate the utilization of the tensor minimal polynomial to determine the limit denoted as $\bar{\mathcal{X}}$. This limit can be represented, as we will explore, through a finite number of terms within the underlying sequence. This section serves as an introduction to the construction of TG-MPE and TG-RRE, which will be discussed in the following section, emphasizing their polynomial features.

\noindent Let $\left\{\mathcal{X}_{n}\right\}$ be the sequence of tensors in $\mathbb{R}^{I_{1} \times I_{2} \times I_{3} \times \cdots \times I_{N}}$ defined by the linear process

\begin{equation}\label{Eq21}
	\mathcal{X}_{n+1}=\mathcal{M} *_{N} \mathcal{X}_{n}+\mathcal{B}, 
\end{equation}
where  $\mathcal{M} \in \mathbb{R}^{I_{1} \times \cdots \times I_{N} \times I_{1} \times \cdots \times I_{N}}$ and $\mathcal{B} \in \mathbb{R}^{I_{1} \cdots \times I_{N}}$ are given tensors.
Then, if $\left\{\mathcal{X}_{n}\right\}$ converges to the  limit $\bar{\mathcal{X}}$,  we get 

\begin{equation}\label{Eq22}
	(\mathcal{I}-\mathcal{M}) *_{N} \bar{\mathcal{X}}=\mathcal{B},
\end{equation}
that is, the tensor $(\mathcal{I}-\mathcal{M})$ is invertible and the convergence of $\left\{\mathcal{X}_{n}\right\}$ implies that $\rho(\mathcal{M})<1$.

\noindent Let us define the forward difference
\begin{equation}\label{Eq23}
	\mathcal{D}_{n}=\mathcal{X}_{n+1}-\mathcal{X}_{n}, \quad n=0,1, \ldots,
\end{equation}
and the error tensor 
\begin{equation}\label{Eq24}
	\mathcal{E}_{n}=\mathcal{X}_{n}-\bar{\mathcal{X}}, \quad n=0,1, \ldots
\end{equation}
Using (\ref{Eq23}) and (\ref{Eq24}), it follows that

\begin{equation}\label{Eq25}
	\mathcal{D}_{n+1}=\mathcal{M} *_{N} \mathcal{D}_{n}, \quad \mathcal{E}_{n+1}=\mathcal{M} *_{N} \mathcal{E}_{n}, \quad n=0,1, \ldots
\end{equation}
Therefore,
\begin{equation}\label{Eq26}
	\mathcal{D}_{n}=(\mathcal{M})^{n} *_{N} \mathcal{D}_{0}, \quad \mathcal{E}_{n}=(\mathcal{M})^{n} *_{N} \mathcal{E}_{0}, \quad n=0,1, \ldots
\end{equation}
We can relate $\mathcal{E}_{n}$ to $\mathcal{D}_{n}$ via
\begin{equation}\label{Eq27}
	\mathcal{D}_{n}=(\mathcal{I}-\mathcal{M}) *_{N} \mathcal{E}_{n} \quad \text { and } \quad \mathcal{E}_{n}=(\mathcal{I}-\mathcal{M})^{-1} *_{N} \mathcal{D}_{n},
\end{equation}
We will also need the following simple relations
\begin{equation}\label{Eq28}
	\mathcal{D}_{n+i}=(\mathcal{M})^{i} *_{N} \mathcal{D}_{n}, \quad \mathcal{E}_{n+i}=(\mathcal{M})^{i} *_{N} \mathcal{E}_{n}, \quad n=0,1, \ldots
\end{equation}

\begin{theorem}\label{Th5}
	The minimal polynomials $\bar{\mathcal{P}}_{\mathcal{M}}^{\left[\mathcal{E}_{n}\right]}$ and $\bar{\mathcal{P}}_{\mathcal{M}}^{\left[\mathcal{D}_{n}\right]}$ of $\mathcal{M}$ with respect to $\mathcal{E}_{n}$ and $\mathcal{D}_{n}$ are the same.
\end{theorem}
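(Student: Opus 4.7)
The plan is to prove the two minimal polynomials are equal by showing each one annihilates the tensor of the other polynomial, and then invoking the uniqueness of the minimal polynomial (Definition~\ref{def10} and the theorem just before it).

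First I would record the central algebraic fact I intend to exploit: for every polynomial $P$, the tensor $P(\mathcal{M})$ commutes with $(\mathcal{I}-\mathcal{M})$ and with $(\mathcal{I}-\mathcal{M})^{-1}$ under the Einstein product $*_N$. Indeed, $P(\mathcal{M})$ is a linear combination of powers $\mathcal{M}^k$, each of which trivially commutes with $\mathcal{I}-\mathcal{M}$; commutation with $(\mathcal{I}-\mathcal{M})^{-1}$ then follows by multiplying the identity $P(\mathcal{M})*_N(\mathcal{I}-\mathcal{M}) = (\mathcal{I}-\mathcal{M})*_N P(\mathcal{M})$ by $(\mathcal{I}-\mathcal{M})^{-1}$ on both sides (the invertibility was established after (\ref{Eq22})). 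The relation~(\ref{Eq27}) that ties $\mathcal{E}_n$ to $\mathcal{D}_n$ via $(\mathcal{I}-\mathcal{M})$ will then be the only other ingredient.

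Next I would carry out the two inclusions. For one direction, apply $(\mathcal{I}-\mathcal{M})$ on the left to the defining identity $\bar{\mathcal{P}}_{\mathcal{M}}^{[\mathcal{E}_n]}(\mathcal{M})*_N \mathcal{E}_n = \mathcal{O}$ and use commutation to move $(\mathcal{I}-\mathcal{M})$ past $\bar{\mathcal{P}}_{\mathcal{M}}^{[\mathcal{E}_n]}(\mathcal{M})$, obtaining $\bar{\mathcal{P}}_{\mathcal{M}}^{[\mathcal{E}_n]}(\mathcal{M})*_N \mathcal{D}_n = \mathcal{O}$ by (\ref{Eq27}). This shows $\bar{\mathcal{P}}_{\mathcal{M}}^{[\mathcal{E}_n]}$ is an annihilator for $\mathcal{D}_n$, so $\deg \bar{\mathcal{P}}_{\mathcal{M}}^{[\mathcal{D}_n]} \le \deg \bar{\mathcal{P}}_{\mathcal{M}}^{[\mathcal{E}_n]}$. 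For the reverse direction, start from $\bar{\mathcal{P}}_{\mathcal{M}}^{[\mathcal{D}_n]}(\mathcal{M})*_N \mathcal{D}_n = \mathcal{O}$, substitute $\mathcal{D}_n = (\mathcal{I}-\mathcal{M})*_N \mathcal{E}_n$, and again commute $(\mathcal{I}-\mathcal{M})$ past the polynomial to reach $(\mathcal{I}-\mathcal{M})*_N \bar{\mathcal{P}}_{\mathcal{M}}^{[\mathcal{D}_n]}(\mathcal{M})*_N \mathcal{E}_n = \mathcal{O}$; left-multiplying by the inverse $(\mathcal{I}-\mathcal{M})^{-1}$ yields $\bar{\mathcal{P}}_{\mathcal{M}}^{[\mathcal{D}_n]}(\mathcal{M})*_N \mathcal{E}_n = \mathcal{O}$, hence $\deg \bar{\mathcal{P}}_{\mathcal{M}}^{[\mathcal{E}_n]} \le \deg \bar{\mathcal{P}}_{\mathcal{M}}^{[\mathcal{D}_n]}$.

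Combining the two inequalities gives equality of the degrees, and uniqueness of the (monic) minimal polynomial with respect to a given tensor then forces $\bar{\mathcal{P}}_{\mathcal{M}}^{[\mathcal{E}_n]} = \bar{\mathcal{P}}_{\mathcal{M}}^{[\mathcal{D}_n]}$. I do not anticipate a real obstacle here; the only subtlety worth verifying carefully is the commutation with $(\mathcal{I}-\mathcal{M})^{-1}$, which depends on $(\mathcal{I}-\mathcal{M})$ being invertible — guaranteed by the convergence assumption leading to (\ref{Eq22}) — and on associativity of the Einstein product, which follows from Propositions on $\phi_{II}$ being an isomorphism and transferring products faithfully to ordinary matrix multiplication.
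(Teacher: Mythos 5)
Your proposal is correct and follows essentially the same route as the paper's own proof: apply $(\mathcal{I}-\mathcal{M})$ to the $\mathcal{E}_n$-identity to show $\bar{\mathcal{P}}_{\mathcal{M}}^{[\mathcal{E}_n]}$ annihilates $\mathcal{D}_n$, substitute $\mathcal{D}_n=(\mathcal{I}-\mathcal{M})*_N\mathcal{E}_n$ and invert to show $\bar{\mathcal{P}}_{\mathcal{M}}^{[\mathcal{D}_n]}$ annihilates $\mathcal{E}_n$, then conclude by minimality. The only differences are cosmetic: you make explicit the commutation of $P(\mathcal{M})$ with $(\mathcal{I}-\mathcal{M})^{\pm 1}$ (which the paper uses silently), and you close via equality of degrees plus uniqueness where the paper phrases it as mutual divisibility.
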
 
\begin{proof}
	Following definition \ref{def10}, one has
	\begin{equation}\label{Eq29}
		\bar{\mathcal{P}}_{\mathcal{M}}^{\left[\mathcal{E}_{n}\right]}(\mathcal{M}) *_{N} \mathcal{E}_{n}=\mathcal{O},
	\end{equation}
	and
	\begin{equation}\label{Eq30}
		\bar{\mathcal{P}}_{\mathcal{M}}^{\left[\mathcal{D}_{n}\right]}(\mathcal{M}) *_{N} \mathcal{D}_{n}=\mathcal{O}.
	\end{equation}
	Multiplying the equality (\ref{Eq29}) by $(\mathcal{I}-\mathcal{M})$, and recalling from (\ref{Eq27}) that $\mathcal{D}_{n}=(\mathcal{I}-\mathcal{M})*_{N} \mathcal{E}_{n}$, we obtain 
	$\bar{\mathcal{P}}_{\mathcal{M}}^{\left[\mathcal{E}_{k}\right]}(\mathcal{M}) *_{N} \mathcal{D}_{n}=\mathcal{O}$, this implies that $\bar{\mathcal{P}}_{\mathcal{M}}^{\left[\mathcal{D}_{k}\right]}$ divides $\bar{\mathcal{P}}_{\mathcal{M}}^{\left[\mathcal{E}_{k}\right]}$. On the other hand, using (\ref{Eq27}), we can rewrite (\ref{Eq30}) as $\bar{\mathcal{P}}_{\mathcal{M}}^{\left[\mathcal{D}_{k}\right]}(\mathcal{M}) *_{N}(\mathcal{I}-\mathcal{M}) *_{N} \mathcal{E}_{n}=\mathcal{O}$, which, upon multiplying by $(\mathcal{I}-\mathcal{M})^{-1}$, gives $\bar{\mathcal{P}}_{\left[\mathcal{D}_{k}\right]}(\mathcal{M}) *_{N} \mathcal{E}_{n}=\mathcal{O}$, this implies that $\bar{\mathcal{P}}_{\left[\mathcal{E}_{k}\right]}$ divides $\bar{\mathcal{P}}_{\left[\mathcal{D}_{k}\right]}.$ Therefore, $\bar{\mathcal{P}}_{\left[\mathcal{E}_{k}\right]} \equiv \bar{\mathcal{P}}_{\left[\mathcal{D}_{k}\right]}$.
\end{proof} 

\medskip
\begin{Remark}
	\noindent In practice, the limit $\bar{\mathcal{X}}$ is not known and then the errors $\mathcal{E}_{n}$, consequently $\bar{\mathcal{P}}_\mathcal{M}^{\left[\mathcal{E}_{k}\right]}$ can not be computed directly using the error $\mathcal{E}_{n}$. Theorem \ref{Th5} states that this  can be done using only our knowledge of the available tensors differences $\mathcal{D}_{n}$.
\end{Remark}
\medskip
\noindent Taking advantage of Theorem \ref{Th5} and   our knowledge of the sequence of differences $\mathcal{D}_{n}$, the following result show that the limit $\bar{\mathcal{X}}$ can be exactly determined via using just a finite number of terms of the sequence $\left\{\mathcal{X}_{k}\right\}$.

\medskip
\begin{theorem} Let $\bar{\mathcal{P}}_\mathcal{M}^{\left[\mathcal{D}_{n}\right]}$ be the minimal polynomial of $\mathcal{M}$ with respect to $\mathcal{D}_{n}$, and denote $d$ its degree
	\begin{equation}\label{Eq31}
		\bar{\mathcal{P}}_\mathcal{M}^{\left[\mathcal{D}_{n}\right]}(x)=\sum_{i=0}^{d} \theta_{i}^{(n)} x^{i}, \quad \theta_{d}^{(n)}=1.
	\end{equation}
	Then $\sum_{i=0}^{d} \theta_{i}^{(n)} \neq 0$, and the limit  $\bar{\mathcal{X}}$ can be expressed as
	\begin{equation}\label{Eq32}
		\bar{\mathcal{X}}=\frac{\sum_{i=0}^{d} \theta_{i}^{(n)} \mathcal{X}_{k+i}}{\sum_{i=0}^{d} \theta_{i}^{(n)}}.
	\end{equation}
\end{theorem}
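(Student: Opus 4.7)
The plan is to combine Theorem~\ref{Th5} with the relations between $\mathcal{D}_n$, $\mathcal{E}_n$ and $\mathcal{X}_n$ derived in (\ref{Eq23})--(\ref{Eq28}), and then rule out $x=1$ as a root of $\bar{\mathcal{P}}_\mathcal{M}^{[\mathcal{D}_n]}$ by a minimality argument.

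First, by Theorem~\ref{Th5} we have the identity of polynomials $\bar{\mathcal{P}}_\mathcal{M}^{[\mathcal{D}_n]} = \bar{\mathcal{P}}_\mathcal{M}^{[\mathcal{E}_n]}$, so the coefficients $\theta_i^{(n)}$ annihilate $\mathcal{E}_n$:
$$
\sum_{i=0}^{d}\theta_i^{(n)} \mathcal{M}^{i} *_{N} \mathcal{E}_n \;=\; \mathcal{O}.
$$
Using (\ref{Eq28}) to rewrite $\mathcal{M}^{i} *_{N} \mathcal{E}_n$ as $\mathcal{E}_{n+i}$, and then the definition (\ref{Eq24}) $\mathcal{E}_{n+i} = \mathcal{X}_{n+i} - \bar{\mathcal{X}}$, the identity becomes
$$
\sum_{i=0}^{d}\theta_i^{(n)}\mathcal{X}_{n+i} \;=\; \Big(\sum_{i=0}^{d}\theta_i^{(n)}\Big)\bar{\mathcal{X}}.
$$
So once we know that $\sum_i \theta_i^{(n)} \neq 0$, dividing yields the announced expression for $\bar{\mathcal{X}}$.

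The remaining, and main, step is to prove $\sum_{i=0}^{d}\theta_i^{(n)} = \bar{\mathcal{P}}_\mathcal{M}^{[\mathcal{D}_n]}(1) \neq 0$. I would argue by contradiction: if $1$ were a root, factor $\bar{\mathcal{P}}_\mathcal{M}^{[\mathcal{D}_n]}(x) = (x-1)Q(x)$ with $\deg Q = d-1$. Plugging in $\mathcal{M}$ and contracting with $\mathcal{D}_n$ gives
$$
(\mathcal{M}-\mathcal{I}) *_{N} Q(\mathcal{M}) *_{N} \mathcal{D}_n \;=\; \mathcal{O}.
$$
Since $\mathcal{I}-\mathcal{M}$ is invertible (this was observed right after (\ref{Eq22}), as a necessary consequence of the convergence of $\{\mathcal{X}_n\}$), left-multiplication by $(\mathcal{M}-\mathcal{I})^{-1}$ yields $Q(\mathcal{M}) *_{N} \mathcal{D}_n = \mathcal{O}$, contradicting the minimality of the degree $d$ of $\bar{\mathcal{P}}_\mathcal{M}^{[\mathcal{D}_n]}$.

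The only subtle point is this last invertibility/minimality step; everything else is a routine bookkeeping of the polynomial identity through the relations (\ref{Eq23})--(\ref{Eq28}) and Theorem~\ref{Th5}. The argument is essentially the tensor analogue of the classical fact that, for a matrix $M$ with $1 \notin \sigma(M)$, the minimal polynomial of $M$ (or with respect to any vector) is coprime with $x-1$.
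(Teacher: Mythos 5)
Your proof is correct, and its first half (transferring the annihilation identity from $\mathcal{D}_n$ to $\mathcal{E}_n$ via Theorem~\ref{Th5} and (\ref{Eq28}), expanding $\mathcal{E}_{n+i}=\mathcal{X}_{n+i}-\bar{\mathcal{X}}$, then dividing) is exactly the paper's argument. Where you diverge is the key non-vanishing step $\sum_i\theta_i^{(n)}=\bar{\mathcal{P}}_\mathcal{M}^{[\mathcal{D}_n]}(1)\neq 0$: the paper disposes of it in one line by asserting that $\rho(\mathcal{M})<1$ implies $1\notin\sigma(\mathcal{M})$ and hence $1$ cannot be a root of the minimal polynomial --- which tacitly uses the fact that every root of the minimal polynomial of $\mathcal{M}$ with respect to a tensor is an eigenvalue of $\mathcal{M}$, a fact the paper never states or proves in the tensor setting. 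Your contradiction argument --- factor out $(x-1)$, apply to $\mathcal{D}_n$, cancel the invertible factor $\mathcal{M}-\mathcal{I}$, and contradict minimality of $d$ --- sidesteps that unproved spectral fact and relies only on the invertibility of $\mathcal{I}-\mathcal{M}$, which the paper explicitly records after (\ref{Eq22}). Both routes ultimately rest on $\rho(\mathcal{M})<1$, but yours is the more self-contained within what the paper actually establishes; the paper's is shorter at the cost of an implicit lemma. (One cosmetic remark: in the degenerate case $d=0$ your factorization step is vacuous, but then $\bar{\mathcal{P}}(1)=1\neq0$ trivially, so nothing is lost.)
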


\begin{proof} 
	Since $\bar{\mathcal{P}}_\mathcal{M}^{\left[\mathcal{D}_{n}\right]}$ is also the minimal polynomial of $\mathcal{M}$ with respect to $\mathcal{E}_{n}$, using  (\ref{Eq28}), we get 
	\begin{equation}\label{Eq33}
		\mathcal{O}=\bar{\mathcal{P}}_\mathcal{M}^{\left[\mathcal{D}_{n}\right]}(\mathcal{M}) *_{N} \mathcal{E}_{n}=\sum_{i=0}^{d} \theta_{i}^{(n)}(\mathcal{M})^{i} *_{N} \mathcal{E}_{n}=\sum_{i=0}^{d} \theta_{i}^{(n)} \mathcal{E}_{n+i}.
	\end{equation}
	Therefore
	\begin{equation}\label{Eq34}
		\sum_{i=0}^{d} \theta_{i}^{(k)} \mathcal{E}_{n+i}=\sum_{i=0}^{d} \theta_{i}^{(n)} \mathcal{X}_{n+i}-\left(\sum_{i=0}^{d} \theta_{i}^{(n)}\right) \mathcal{S} =\mathcal{O}.
	\end{equation}
	Since $\rho(\mathcal{M})<1$, one is not an eigenvalue of $\mathcal{M}$ and  then  $\sum_{i=0}^{d} \theta_{i}^{(n)}=\bar{\mathcal{P}}_{\left[\mathcal{D}_{n}\right]}(1) \neq 0$. Dividing by $\sum_{i=0}^{d} \theta_{i}^{(n)}$, we get the relation (\ref{Eq32}).
	Now,  setting 
	\begin{equation}\label{Eq35}
		\delta_{i}^{(n)}=\frac{\theta_{i}^{(n)}}{\sum_{j=0}^{d} \theta_{j}^{(n)}}, \quad i=1,2, \ldots, d, 
	\end{equation}
	we get 
	\begin{equation}\label{Eq36}
		\bar{\mathcal{X}}=\sum_{i=0}^{d} \delta_{i}^{(n)} \mathcal{X}_{n+i}, 
	\end{equation}
	and by construction
	\begin{equation}\label{Eq37}
		\sum_{i=0}^{d} \delta_{i}^{(n)}=1.
	\end{equation}
\end{proof}

Notice  that the scalars $\theta_{i}^{(n)},i=0, \ldots, d$ verify the equation
\begin{equation}\label{Eq38}
	\sum_{i=0}^{d} \theta_{i}^{(n)} \mathcal{D}_{n+i}=\mathcal{O}.
\end{equation}
Recalling $\theta_{d}^{(n)}=1$, we have 
\begin{equation}\label{Eq39}
	\sum_{i=0}^{d-1} \theta_{i}^{(k)} \mathcal{D}_{k+i}=-\mathcal{D}_{k+m}.
\end{equation}
Using $n$-mode product, (\ref{Eq39}) can be expressed as
\begin{equation}\label{Eq40}
	\mathbb{D}_{d-1}^{n} \bar{\times}_{(N+1)} \tilde{\theta}^{(n)}=-\mathcal{D}_{n+d}, \quad \tilde{\theta}^{(n)}=\left(\theta_{1}^{(n)}, \theta_{2}^{(n)}, \ldots, \theta_{d-1}^{(n)}\right)^{T},
\end{equation}
where $\mathbb{D}_{d}^{n}=\left[\mathcal{D}_{n}, \mathcal{D}_{n+1}, \ldots, \mathcal{D}_{n+d}\right]$ is the $I_{1} \times I_{2} \times I_{3} \times \cdots \times I_{N} \times d$ tensor such that the $j^{t h}$ frontal slice ( obtained by fixing the last index at $j$ ) $\left(\mathbb{D}_{d}^{n}\right)_{:,:, \ldots,:, j}=\mathcal{D}_{n+j}, 0 \leq j \leq d$.

\noindent Following Theorem \ref{Th5}, the equation (\ref{Eq40}) is consistent and has a unique solution $\tilde{\theta}^{(k)}$. Actually, that comes back to the uniqueness of the minimal polynomial. 

\noindent Invoking (\ref{Eq35}) and dividing (\ref{Eq38}) by $\sum_{i=0}^{d} \theta_{i}^{(n)}$, we find that the $\delta_{i}^{(n)}$ satisfy the constrained system
\begin{equation}\label{Eq41}
	\mathbb{D}_{d}^{n} \bar{\times}_{(N+1)} \delta^{(n)}=0, \quad \text { and } \quad \sum_{i=0}^{d} \delta_{i}^{(n)}=1, \quad \delta^{(n)}=\left(\delta_{1}^{(n)}, \ldots, \delta_{d}^{(n)}\right)^{T}.
\end{equation}

\noindent This is a consistent system of $(d+1)$ unknowns $\delta_{0}^{(n)}, \delta_{1}^{(n)}, \ldots, \delta_{d}^{(n)}$ that has a unique solution.

\noindent There are then two scenarios to determine the scalars $\delta_{0}^{(n)}, \delta_{1}^{(n)}, \ldots, \delta_{d}^{(n)}$ :\begin{enumerate}
	\item Indirectly: via the solution of the equation (\ref{Eq38}) and then set
	
	\noindent  $\delta_{i}^{(n)}=$ $\frac{\theta_{i}^{(n)}}{\sum_{i=0}^{d} \theta_{i}^{(n)}}, \quad i=0,1, \ldots, d.$
	
	\item Directly : via the solution of the constrained problem (\ref{Eq41}).
\end{enumerate}
As will be shown, each one of these two scenarios leads to an extrapolation method that can be used as an accelerator for speeding up slowly convergent sequences.

\section{Approximation via least squares problems}

The degree $d$ of the minimal polynomial can be very large and then  the solution of equations (\ref{Eq40}) and (\ref{Eq41}) could  be very expensive for the  computation time and storage requirements. Therefore, we have replaced the degree $d$  by a smaller integer $\bar{d}<<d$ to get approximate solutions. Given the minimality of degree $d$ and linear independence of the set $S_{\bar{d}}=\left\{\mathcal{D}_{n}, \mathcal{D}_{n+1}, \ldots, \mathcal{D}_{n+\bar{d}}\right\}$, the equations (\ref{Eq40}) and (\ref{Eq41}) are no longer consistent and have no  solutions in the ordinary sense. An alternative approach to overcame this obstacle is going throughout  least-squares solutions , indeed, such the former always exists and leads to an effective approximations to  limit $\bar{\mathcal{X}}$.

Taking into account this novel approach and what we have seen in the previous section, we have two available tracks (the following scenarios $1$ or $2$) to approximate the limit $\bar{\mathcal{X}}$.

1-  Solve for $\tilde{\theta}^{(n)}=\left(\theta_{0}^{(n)}, \theta_{1}^{(n)}, \ldots, \theta_{\bar{d}-1}^{(n)}\right)^{T}$ the system $(\ref{Eq40})$ in the least squares sense, this leads to the problem 
\begin{equation}\label{Eq42}
	\min _{\tilde{\theta}^{(n)}}\left\|\mathbb{D}_{\bar{d}-1}^{n} \bar{\times}_{(N+1)} \tilde{\theta}^{(n)}+\mathcal{D}_{n+\bar{d}}\right\|_{F}.
\end{equation}
\noindent We take $\theta_{\bar{d}}^{(n)}=1$ and  compute
\begin{equation}\label{Eq43}
	\delta_{i}^{(n)}=\frac{\theta_{i}^{(n)}}{\sum_{i=0}^{\bar{d}} \theta_{i}^{(n)}}, \quad i=0,1, \ldots, \bar{d}.
\end{equation}
Then, we set $\mathcal{T}_{n}^{(\bar{d})}=\sum_{i=0}^{\bar{d}} \delta_{i}^{(n)} \mathcal{X}_{n+i}$.

2- Solve for $\delta^{(n)}=\left(\delta_{0}^{(n)}, \delta_{1}^{(n)}, \ldots, \delta_{\bar{d}}^{(n)}\right)$ the constrained problem $(\ref{Eq41})$ in the least squares sense, this leads to solve the minimisation problem 
\begin{equation}\label{Eq44}
	\min _{\delta(n)}\left\|\mathbb{D}_{\bar{d}}^{n} \bar{\times}_{(N+1)} \delta^{(n)}\right\|_{F} \quad \text { subject~to } \quad  \sum_{i=0}^{\bar{d}} \delta_{i}^{(n)}=1,
\end{equation}
set $\mathcal{T}_{n}^{(\bar{d})}=\sum_{i=0}^{\bar{d}} \delta_{i}^{(n)} \mathcal{X}_{n+i}$ as an approximation to limit $\bar{\mathcal{X}}.$

For the purpose of making the implementation of problem (\ref{Eq44}) easier, we replace it with an equivalent unconstrained problem. We emphasize this through the following proposition.
\medskip
\begin{proposition}
	Let $\delta^{(n)}=\left(\delta_{0}^{(n)}, \delta_{1}^{(n)}, \ldots, \delta_{\bar{d}}^{(n)}\right)$ be the solution of (\ref{Eq44}). Then  we have
	\begin{equation}\label{Eq45}
		\mathcal{T}_{n}^{(\bar{d})}=\sum_{i=0}^{\bar{d}} \delta_{i}^{(n)} \mathcal{X}_{n+i}=\mathcal{X}_{n}+\sum_{j=0}^{\bar{d}-1} \mu_{j}^{(n)} \mathcal{D}_{n+j},
	\end{equation}
	
	\noindent where  $\mu^{(n)}=\left(\mu_{0}^{(n)}, \mu_{1}^{(n)}, \ldots, \mu_{\bar{d}}^{(n)}\right)^{T}$ is such that
	
	\begin{equation}\label{Eq46}
		\mu^{(n)}=\underset{x}{\operatorname{argmin}}\left\|\mathbb{W}_{\bar{d}-1}^{n} \bar{\times}_{(N+1)} x+\mathcal{D}_{n}\right\|_{F},
	\end{equation}
	
	\noindent and  $\mathbb{W}_{\bar{d}-1}^{n}=\left[\mathcal{W}_{n}, \mathcal{W}_{n+1}, \ldots, \mathcal{W}_{n+\bar{d}-1}\right]$ and $\mathcal{W}_{j}=\mathcal{D}_{j+1}-\mathcal{D}_{j}, \quad j=n, \ldots, n+$ $\bar{d}-1$.
\end{proposition}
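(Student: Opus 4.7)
The plan is to re-parametrize the constrained problem (\ref{Eq44}) so that the affine constraint $\sum_{i=0}^{\bar{d}}\delta_i^{(n)}=1$ is eliminated automatically, turning it into an unconstrained least-squares problem of dimension one smaller. The natural tool is summation-by-parts (Abel's transformation) applied both to the approximation $\mathcal{T}_n^{(\bar{d})}$ and to the residual driving (\ref{Eq44}).

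First, I would prove identity (\ref{Eq45}). Writing $\mathcal{X}_{n+i}=\mathcal{X}_n+\sum_{j=0}^{i-1}\mathcal{D}_{n+j}$, substituting into $\sum_{i=0}^{\bar{d}}\delta_i^{(n)}\mathcal{X}_{n+i}$, and swapping the order of the double sum, one obtains
\begin{equation*}
\sum_{i=0}^{\bar{d}}\delta_i^{(n)}\mathcal{X}_{n+i}=\Bigl(\sum_{i=0}^{\bar{d}}\delta_i^{(n)}\Bigr)\mathcal{X}_n+\sum_{j=0}^{\bar{d}-1}\Bigl(\sum_{i=j+1}^{\bar{d}}\delta_i^{(n)}\Bigr)\mathcal{D}_{n+j}.
\end{equation*}
The constraint collapses the first factor to $1$ and forces the definition $\mu_j^{(n)}:=\sum_{i=j+1}^{\bar{d}}\delta_i^{(n)}$, which gives (\ref{Eq45}) directly.

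Second, I would apply the same Abel transformation to the objective of (\ref{Eq44}). Using $\mathcal{D}_{n+i}=\mathcal{D}_n+\sum_{j=0}^{i-1}\mathcal{W}_{n+j}$ and once again invoking $\sum_i\delta_i^{(n)}=1$, the tensor $\mathbb{D}_{\bar{d}}^{n}\bar{\times}_{(N+1)}\delta^{(n)}=\sum_i\delta_i^{(n)}\mathcal{D}_{n+i}$ rewrites as $\mathcal{D}_n+\sum_{j=0}^{\bar{d}-1}\mu_j^{(n)}\mathcal{W}_{n+j}$, which by definition of the mode-$(N+1)$ product is exactly $\mathbb{W}_{\bar{d}-1}^{n}\bar{\times}_{(N+1)}\mu^{(n)}+\mathcal{D}_n$. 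Hence the objective values of (\ref{Eq44}) and (\ref{Eq46}) agree pointwise under the substitution $\delta\mapsto\mu$.

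Third, I would check that the change of variables is a genuine bijection between $\{\delta\in\mathbb{R}^{\bar{d}+1}:\sum_i\delta_i=1\}$ and $\mathbb{R}^{\bar{d}}$, so that minimizing over either set yields the same optimum. The inverse map is $\delta_0=1-\mu_0$, $\delta_k=\mu_{k-1}-\mu_k$ for $1\le k\le\bar{d}-1$, and $\delta_{\bar{d}}=\mu_{\bar{d}-1}$; the constraint $\sum_i\delta_i=1$ then holds by telescoping, so every $\mu\in\mathbb{R}^{\bar{d}}$ yields an admissible $\delta$. This equivalence transports minimizers, so the minimizer $\mu^{(n)}$ of (\ref{Eq46}) produces the same $\mathcal{T}_n^{(\bar{d})}$ as the minimizer $\delta^{(n)}$ of (\ref{Eq44}) via (\ref{Eq45}).

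The proof is essentially pure bookkeeping; the only subtlety is keeping the double indices straight in the Abel transformation and making sure the residual identity really uses the constraint $\sum\delta_i=1$ in exactly the same way as the expression for $\mathcal{T}_n^{(\bar{d})}$ does, so that the same $\mu_j^{(n)}$ appear in both reductions. I do not expect any further obstacle.
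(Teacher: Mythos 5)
Your proposal is correct and follows essentially the same route as the paper's proof: the same summation-by-parts change of variables $\mu_j^{(n)}=\sum_{i=j+1}^{\bar{d}}\delta_i^{(n)}$, applied to both $\mathcal{T}_n^{(\bar{d})}$ and the residual, with the constraint $\sum_i\delta_i^{(n)}=1$ absorbing the leading term. Your explicit check that the substitution is a bijection (with the inverse map $\delta_0=1-\mu_0$, $\delta_k=\mu_{k-1}-\mu_k$, $\delta_{\bar{d}}=\mu_{\bar{d}-1}$) is a small extra dose of rigor the paper leaves implicit, stating that inverse only after the proof in (\ref{Eq51}).
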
 

\medskip
\begin{proof}
	Using the notations $\mathcal{D}_{j}=\mathcal{X}_{j+1}-\mathcal{X}_{j}$ and $\mathcal{W}_{j}=\mathcal{D}_{j+1}-\mathcal{D}_{j}$, we have
	\begin{equation}\label{Eq47}
		\mathcal{X}_{n+i}=\mathcal{X}_{n}+\sum_{j=0}^{i-1} \mathcal{D}_{n+j} \quad \text { and } \quad \mathcal{D}_{n+i}=\mathcal{D}_{n}+\sum_{j=0}^{i-1} \mathcal{W}_{j}.
	\end{equation}
	Setting 
	\begin{equation}\label{Eq48}
		\mu_{j}^{(n)}=\sum_{i=j+1}^{\bar{d}} \delta_{i}^{(n)}=1-\sum_{i=0}^{j} \delta_{i}^{(n)}, \quad j=0,1 \ldots, \bar{d}-1,
	\end{equation}
	and recalling that $\sum_{i=0}^{\bar{d}} \delta_{i}^{(n)}=1$, we get 
	
	\begin{equation}\label{Eq49}
		\sum_{i=0}^{\bar{d}} \delta_{i}^{(n)} \mathcal{X}_{n+i}=\mathcal{X}_{n}+\sum_{j=0}^{\bar{d}-1} \mu_{j}^{(n)} \mathcal{D}_{n+j} \quad \text { and } \quad \sum_{i=0}^{\bar{d}} \delta_{i}^{(n)} \mathcal{D}_{n+i}=\mathcal{D}_{n}+\sum_{j=0}^{\bar{d}-1} \mu_{j}^{(n)} \mathcal{W}_{n+j}.
	\end{equation}
	Then
	\begin{equation}\label{Eq50}
		\left\{\begin{array}{l}
			\delta^{(n)}=\hspace{-0.3cm}\underset{\substack{x=\left(x_{0}, \ldots, x_{\bar{d}}\right) \\
					\sum x_{i}=1}}{\operatorname{argmin}}\left\|\mathbb{D}_{\bar{d}}^{n}  \bar{\times}_{(N+1)} x\right\|_{F} \\
			\mathcal{T}_{n}^{(\bar{d})}=\sum_{i=0}^{\bar{d}} \delta_{i}^{(n)} \mathcal{X}_{n+i}
		\end{array}\right.
		\hspace{-0.6cm}\Longleftrightarrow 
		\left\{\begin{array}{l}
			\mu^{(n)}=\hspace{-0.3cm}\underset{\substack{x=\left(x_{0}, \ldots, x_{\bar{d}-1}\right)}}{\operatorname{argmin}}\left\|\mathbb{W}_{\bar{d}-1}^{n} \bar{\times}_{(N+1)} x+\mathcal{D}_{n}\right\|_{F} \\
			\mathcal{T}_{n}^{(\bar{d})}=\mathcal{X}_{n}+\sum_{j=0}^{\bar{d}-1} \mu_{j}^{(n)} \mathcal{D}_{n+j}.
		\end{array}\right.
	\end{equation}
\end{proof} 

\noindent Notice that the $\delta_{i}^{(n)}$'s could  also be computed from the $\mu_{i}^{(n)}$ as follows
\begin{equation}\label{Eq51}
	\delta_{0}^{(n)}=1-\mu_{0}^{(n)}, \quad \delta_{i}^{(n)}=\mu_{i-1}^{(n)}-\mu_{i}^{(n)}, \quad i=1, \ldots, \bar{d}-1, \quad \delta_{\bar{d}}^{(n)}=\mu_{\bar{d}-1}^{(n)}.
\end{equation}

\noindent The steps of these two approaches are  summarized as:
\begin{enumerate}
	
	\item Choose $n$ and $\bar{d}$ and terms $\mathcal{X}_{n}, \mathcal{X}_{n+1}, \ldots, \mathcal{X}_{n+\bar{d}+1}.$
	
	\item Compute the tensors $\mathcal{D}_{n}, \mathcal{D}_{n+1}, \ldots, \mathcal{D}_{n+\bar{d}}$ and form the tensor $\mathbb{D}_{\bar{d}-1}^{k}$.
	
	\item Compute the tensors $\mathcal{W}_{n}, \mathcal{W}_{n+1}, \ldots, \mathcal{W}_{n+\bar{d}-1}$ and form the tensor $\mathbb{W}_{\bar{d}-1}^{n}$.
	
	\item Solve for $x=\left(x_{0}, \ldots, x_{\bar{d}-1}\right)$ the problem
	\begin{equation}\label{Eq52}
		\min _{x}\left\|\mathbb{A}\bar{\times}_{(N+1)} x+\mathcal{B}\right\|_{F},
	\end{equation}
	with
	\begin{equation}\label{Eq53}
		(\mathbb{A}, \mathcal{B})= \begin{cases}\left(\mathbb{D}_{\bar{d}-1}^{n}, \mathcal{D}_{n+\bar{d}}\right) & \text { (approach } 1) \\ \left(\mathbb{W}_{\bar{d}-1}^{n}, \mathcal{D}_{n}\right) & (\text { approach } 2).\end{cases}
	\end{equation}
	
	\item With $x=\left(x_{0}, \ldots, x_{\bar{d}-1}\right)$ available, compute $\delta^{(n)}=\left(\delta_{0}^{(n)}, \delta_{1}^{(n)}, \ldots, \delta_{\bar{d}}^{(n)}\right)$ as
	
	\begin{itemize}
		
		\item Approach 1.
		\begin{equation}\label{Eq54}
			\delta_{i}^{(n)}=\frac{x_{i}}{\sum_{j=0}^{\bar{d}} x_{j}}, i=0,1, \ldots, \bar{d}\quad \text { with } x_{\bar{d}}=1,
		\end{equation}
		\item Approach 2.
		\begin{equation}\label{Eq55}
			\delta_{0}^{(n)}=1-x_{0}, \quad \delta_{i}^{(n)}=x_{i-1}-x_{i}, \quad i=1, \ldots, \bar{d}-1, \quad \delta_{\bar{d}}^{(n)}=x_{\bar{d}-1}.
		\end{equation}
	\end{itemize}
	\item Compute $\mathcal{T}_{n}^{(\bar{d})}$ by 
	
	\begin{equation}\label{Eq56}
		\mathcal{T}_{n}^{(\bar{d})}=\sum_{i=0}^{\bar{d}} \delta_{i}^{(n)} \mathcal{X}_{n+i}.
	\end{equation}
\end{enumerate}
The resulting method following the approach $1$ is TG-MPE while  the approach $2$ corresponds  to   the TG-RRE method.

\section{Implementation via tensor global-QR decomposition}

The purpose of this section is to give an efficient implementation of the two approaches
using the global-QR decomposition given in \cite{17}. 
Let $\mathbb{A}=$ $\left[\mathcal{A}_{1}, \mathcal{A}_{2}, \ldots, \mathcal{A}_{m}\right] \in \mathbb{R}^{I_{1}  \times I_{3} \times \cdots \times I_{N} \times m},$ 
be an $(N+1)$-mode tensor with column tensors 
$\mathcal{A}_{1}, \mathcal{A}_{2}, \ldots, \mathcal{A}_{m} \in \mathbb{R}^{I_{1}  \times I_{2} \times \cdots \times I_{N}}$.
Then, there is an $(N+1)$-mode orthogonal tensor
$\mathbb{Q}=\left[\mathcal{Q}_{1}, \mathcal{Q}_{2},\ldots , \mathcal{Q}_{m} \right] \in
\mathbb{R}^{I_{1} \times I_{2} \times \cdots \times I_{N} \times m},$ 
satisfying 
$\mathbb{Q} \boxdot \mathbb{Q}= I_{m \times m}$
and an upper triangular matrix $R \in \mathbb{R}^{m \times m}$ such that

\begin{equation}\label{Eq58}
	\mathbb{A}=\mathbb{Q} \times_{(N+1)} R^{T}.
\end{equation}
The steps of this decomposition are summarized in the following algorithm.

\begin{algorithm}[H]\label{NNN}.
	\caption{Global tensor QR}
	\begin{algorithmic}[1]
		\item[Inputs:] $\mathbb{A}=\left[ \mathcal{A}_{0}, \mathcal{A}_{2},\ldots,\mathcal{A}_{m-1}\right].$
		\item[Ouputs:]  $R=\left[r_{ij} \right] $ and  $\mathbb{Q}=\left[ \mathcal{Q}_{0}, \mathcal{Q}_{2},\ldots,\mathcal{Q}_{m-1}\right]. $
		\begin{enumerate}
			\item Compute the scalar $r_{0,0}=\langle\mathcal{A}_{0},\mathcal{A}_{0}\rangle^{\frac{1}{2}}$ and $\mathcal{Q}_{0}=\displaystyle \frac{1}{r_{0,0}}\mathcal{A}_{0}.$
			\item For{ $i=1,...,m$}
			\begin{enumerate}
				\item  $\mathcal{U}=\mathcal{A}_{i}.$
				\item For{$j=1,...,i-1$}
				\begin{itemize}
					\item  $r_{j,i}=\langle\mathcal{Q}_{j},\mathcal{U}\rangle,\hspace{0.3cm}
					\mathcal{V}=   \mathcal{U}-r_{j,i}\mathcal{Q}_{j}.$
				\end{itemize} 
				\item EndFor 
				\item $r_{i,i}=\langle\mathcal{U},\mathcal{U}\rangle^{\frac{1}{2}}$. 
				\item  $\mathcal{Q}_{i}=\frac{1}{r_{i,i}}\mathcal{U}$.
			\end{enumerate}
			\item EndFor
		\end{enumerate}
	\end{algorithmic}
\end{algorithm}

\noindent The following result provides an equivalent problem that generalizes for tensors the normal equation. We use it to solve the problem (\ref{Eq52}).
\medskip
\begin{theorem}
	(see \cite{2} ) Let $\mathbb{A}=\left[\mathcal{A}_{1}, \mathcal{A}_{2}, \ldots, \mathcal{A}_{m}\right] \in \mathbb{R}^{I_{1} \times I_{2}  \times \cdots \times I_{N} \times m}$ and $\mathcal{B} \in$ $\mathbb{R}^{I_{1} \times I_{2}  \times \cdots \times I_{N}}$. Then 
	
	\begin{equation}\label{Eq59}
		\tilde{x}=\underset{x \in \mathbb{R}^{n}}{\operatorname{argmin}}\left\|\mathbb{A} \bar{\times}_{(N+1)} x-\mathcal{B}\right\|_{F} \Longleftrightarrow\left(\mathbb{A} \boxdot^{(N+1)} \mathbb{A}\right) \tilde{x}=\mathbb{A} \boxdot^{(N+1)} \mathcal{B},
	\end{equation}
	where
	
	\begin{equation}\label{Eq60}
		\left(\mathbb{A} \boxdot^{(N+1)} \mathbb{A}\right)=\left[\left\langle\mathcal{A}_{i}, \mathcal{A}_{j}\right\rangle\right]_{1 \leq i, j \leq m} \in \mathbb{R}^{m \times m},
	\end{equation}
	and 
	\begin{equation}\label{Eq61} \linebreak \mathbb{A} \boxdot^{(N+1)} \mathcal{B}     =\left[\langle\mathcal{A}_{1},\mathcal{B}\rangle,\langle\mathcal{A}_{2},\mathcal{B}\rangle,\ldots,\langle\mathcal{A}_{m},\mathcal{B}\rangle \right]^{T}\in \mathbb{R}^{m } .\end{equation} 
\end{theorem}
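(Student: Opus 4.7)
The plan is to recognize the problem as a standard Hilbert-space least-squares projection, once we rewrite the $(N+1)$-mode product with the vector $x$ as a linear combination of the column tensors. First, using the definition (\ref{Eq10}) of $\bar{\times}_{n}$ together with the fact that $\mathbb{A}$ is built by stacking the $\mathcal{A}_i$ along the $(N+1)$-th mode (so that $(\mathbb{A})_{i_{1}\ldots i_{N} i}=(\mathcal{A}_i)_{i_{1}\ldots i_{N}}$), I would establish the elementary identity
\begin{equation*}
\mathbb{A} \bar{\times}_{(N+1)} x \;=\; \sum_{i=1}^{m} x_i \, \mathcal{A}_i \;\in\; \mathbb{R}^{I_{1} \times \cdots \times I_{N}},
\end{equation*}
which recasts the objective as a standard linear-combination least-squares problem
\begin{equation*}
\min_{x \in \mathbb{R}^{m}} \Big\| \sum_{i=1}^{m} x_i \mathcal{A}_i - \mathcal{B}\Big\|_{F}
\end{equation*}
set in the finite-dimensional Hilbert space $\bigl(\mathbb{R}^{I_{1}\times\cdots\times I_{N}},\langle\cdot,\cdot\rangle\bigr)$ equipped with the Frobenius inner product defined in (\ref{Eq7})--(\ref{Eq8}).

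Next I would invoke the classical orthogonality characterization of the projection onto the finite-dimensional subspace $\operatorname{span}(\mathcal{A}_{1},\ldots,\mathcal{A}_{m})$: a vector $\tilde{x}$ is a minimizer if and only if the residual is orthogonal to every generator, $\langle \mathcal{A}_j,\; \sum_i \tilde{x}_i \mathcal{A}_i - \mathcal{B}\rangle = 0$ for each $j=1,\ldots,m$. Using the bilinearity of $\langle\cdot,\cdot\rangle$ this rearranges as
\begin{equation*}
\sum_{i=1}^{m} \langle \mathcal{A}_j,\mathcal{A}_i\rangle \,\tilde{x}_i \;=\; \langle \mathcal{A}_j,\mathcal{B}\rangle, \qquad j=1,\ldots,m,
\end{equation*}
which, when assembled row by row, is exactly the system $(\mathbb{A}\boxdot^{(N+1)}\mathbb{A})\tilde{x} = \mathbb{A}\boxdot^{(N+1)}\mathcal{B}$ claimed in (\ref{Eq60})--(\ref{Eq61}).

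The argument carries no genuine obstacle; the only point requiring care is the first identity $\mathbb{A}\bar{\times}_{(N+1)}x=\sum_i x_i \mathcal{A}_i$, which is essentially a direct reading of (\ref{Eq10}) but must be tracked through the index bookkeeping. If preferred, one can bypass the Hilbert-space language altogether by applying $\phi_{I1}$ coordinatewise: the vectors $a_i:=\phi_{I1}(\mathcal{A}_i)$ form the columns of a matrix $A\in\mathbb{R}^{I\times m}$, $b:=\phi_{I1}(\mathcal{B})\in\mathbb{R}^{I}$, and since $\phi$ preserves Frobenius inner products, the problem becomes the classical $\min_{x}\|Ax-b\|_{2}$. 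Its ordinary normal equation $A^{T}A\tilde{x}=A^{T}b$ then transcribes back to the desired identity through the correspondences $(A^{T}A)_{ij}=\langle\mathcal{A}_i,\mathcal{A}_j\rangle$ and $(A^{T}b)_{i}=\langle\mathcal{A}_i,\mathcal{B}\rangle$.
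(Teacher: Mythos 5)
Your proof is correct. Note that the paper itself supplies no proof of this theorem: it is quoted from an external reference, so there is no in-paper argument to compare against. Your derivation is the standard one that such a reference would contain: the identity $\mathbb{A}\bar{\times}_{(N+1)}x=\sum_{i}x_{i}\mathcal{A}_{i}$ reduces the problem to best approximation from $\operatorname{span}\{\mathcal{A}_{1},\ldots,\mathcal{A}_{m}\}$ in the Frobenius inner product, and the orthogonality characterization of the projection yields exactly the Gram system $\left(\mathbb{A}\boxdot^{(N+1)}\mathbb{A}\right)\tilde{x}=\mathbb{A}\boxdot^{(N+1)}\mathcal{B}$; your alternative route through the flattening $\phi_{I1}$, which is an inner-product-preserving reshaping onto $\mathbb{R}^{I}$, gives the same conclusion via the classical normal equations $A^{T}A\tilde{x}=A^{T}b$. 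Both versions are valid without any rank assumption on the column tensors, since the normal equations characterize the full set of minimizers. The only cosmetic point worth flagging is that the minimization variable should range over $\mathbb{R}^{m}$ (the number of column tensors), not $\mathbb{R}^{n}$ as written in the statement.
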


\begin{proposition}\label{p9}
	Let $\mathbb{A}= \mathbb{Q}\times_{(N+1)}R^{T}$ be a  tensor global-QR decomposition of $\mathbb{A}$. Then,
	\begin{equation}\label{Eq62}\mathbb{A}\boxdot^{(N+1)} \mathbb{A}=R^{T}R.\end{equation}
\end{proposition}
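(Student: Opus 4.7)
The plan is to reduce the identity to a direct componentwise calculation using only the definitions of $n$-mode product, the Frobenius inner product, and the orthonormality of the Q-slices. First, I would unpack the decomposition $\mathbb{A}=\mathbb{Q}\times_{(N+1)}R^{T}$: by the definition of the mode-$(N{+}1)$ product with the $m\times m$ matrix $R^{T}$, the $i$-th frontal slice of $\mathbb{A}$ satisfies
\begin{equation*}
\mathcal{A}_{i}=\sum_{j=1}^{m}(R^{T})_{ij}\,\mathcal{Q}_{j}=\sum_{j=1}^{m}R_{ji}\,\mathcal{Q}_{j},
\end{equation*}
so that each column tensor of $\mathbb{A}$ is written explicitly as a linear combination of the column tensors of $\mathbb{Q}$ with coefficients read off from $R$.

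Next, I would compute the $(i,k)$-entry of $\mathbb{A}\boxdot^{(N+1)}\mathbb{A}$ directly from its definition in \eqref{Eq60}, namely $\bigl(\mathbb{A}\boxdot^{(N+1)}\mathbb{A}\bigr)_{ik}=\langle\mathcal{A}_{i},\mathcal{A}_{k}\rangle$. Substituting the previous expansion and exploiting the bilinearity of the Frobenius inner product $\langle\cdot,\cdot\rangle$ yields
\begin{equation*}
\langle\mathcal{A}_{i},\mathcal{A}_{k}\rangle=\Bigl\langle\sum_{j}R_{ji}\mathcal{Q}_{j},\,\sum_{l}R_{lk}\mathcal{Q}_{l}\Bigr\rangle=\sum_{j,l}R_{ji}R_{lk}\,\langle\mathcal{Q}_{j},\mathcal{Q}_{l}\rangle.
\end{equation*}

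Finally, the orthogonality assumption $\mathbb{Q}\boxdot\mathbb{Q}=I_{m\times m}$ gives $\langle\mathcal{Q}_{j},\mathcal{Q}_{l}\rangle=\delta_{jl}$, which collapses the double sum to $\sum_{j}R_{ji}R_{jk}=(R^{T}R)_{ik}$. Since this holds for all $1\le i,k\le m$, the matrix identity $\mathbb{A}\boxdot^{(N+1)}\mathbb{A}=R^{T}R$ follows. There is no genuine obstacle in this argument; the only thing to be careful about is keeping the transpose convention consistent so that the mode-$(N{+}1)$ action of $R^{T}$ produces coefficients $R_{ji}$ (not $R_{ij}$) in front of $\mathcal{Q}_{j}$, which is precisely what makes the final contraction match $R^{T}R$ rather than $RR^{T}$.
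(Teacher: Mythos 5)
Your argument is correct and complete: unpacking the mode-$(N{+}1)$ product to get $\mathcal{A}_{i}=\sum_{j}R_{ji}\mathcal{Q}_{j}$, expanding $\langle\mathcal{A}_{i},\mathcal{A}_{k}\rangle$ by bilinearity, and collapsing the double sum with $\langle\mathcal{Q}_{j},\mathcal{Q}_{l}\rangle=\delta_{jl}$ indeed yields $\sum_{j}R_{ji}R_{jk}=(R^{T}R)_{ik}$, and your remark about keeping the transpose convention straight is exactly the one place where a sign-of-index error could creep in. The paper states this proposition without any proof, so there is no authorial argument to compare against; your direct componentwise computation is the natural one and supplies the missing justification.
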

\noindent From (\ref{Eq59}) and (\ref{Eq62}) the tensor equation \ref{Eq52} can be transformed into an equivalent  matrix  problem as shown in the following theorem.

\noindent 
\begin{theorem}\label{Thm10}
	With the notations above, we have the following equivalence
	\begin{equation}\label{Eq63}
		\tilde{x}=\underset{x\in \mathbb{R}^{n}}{argmin}\parallel\mathbb{A}\bar{\times}_{(N+1)}x-\mathcal{B} \parallel_{F}\Longleftrightarrow R^{T}R\tilde{x}=b,
	\end{equation}
	where 
	\begin{equation}\label{Eq64}
		b=\mathbb{A} \boxdot^{(N+1)} \mathcal{B}.
	\end{equation}
\end{theorem}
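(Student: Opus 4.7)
The plan is to obtain Theorem \ref{Thm10} by chaining together the two results stated just above it, namely equation (\ref{Eq59}) (the tensor normal equations) and Proposition \ref{p9} (the identity $\mathbb{A}\boxdot^{(N+1)}\mathbb{A}=R^TR$ coming from the global-QR decomposition). There is essentially no new content to introduce: the theorem is a reformulation of the normal equations in which the Gram-type tensor product $\mathbb{A}\boxdot^{(N+1)}\mathbb{A}$ is replaced by the matrix factor $R^TR$.

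First, I would apply the equivalence (\ref{Eq59}) directly to the least-squares problem
\[
\tilde{x}=\underset{x\in\mathbb{R}^{n}}{\operatorname{argmin}}\,\bigl\|\mathbb{A}\,\bar{\times}_{(N+1)}\,x-\mathcal{B}\bigr\|_{F},
\]
concluding that $\tilde{x}$ is a minimizer if and only if it solves the tensor normal equation
\[
\bigl(\mathbb{A}\boxdot^{(N+1)}\mathbb{A}\bigr)\tilde{x}=\mathbb{A}\boxdot^{(N+1)}\mathcal{B}.
\]
Second, I would invoke Proposition \ref{p9} to replace the left-hand Gram matrix with $R^TR$, and set $b=\mathbb{A}\boxdot^{(N+1)}\mathcal{B}$ as defined in (\ref{Eq64}). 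The two substitutions together give
\[
R^TR\,\tilde{x}=b,
\]
which is the desired equivalence.

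Since each of the two replacements is an \emph{equivalence} (the normal equations characterize the minimizer, and Proposition \ref{p9} is an identity), no direction of the $\Longleftrightarrow$ requires additional work. There is no real obstacle here; the only care needed is to verify that the global-QR factors $\mathbb{Q}$ and $R$ are built on the same tensor $\mathbb{A}$ that appears in the minimization, so that Proposition \ref{p9} applies verbatim. Assuming that (which is the hypothesis of the theorem), the argument is purely a two-line substitution.
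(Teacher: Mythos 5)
Your proposal is correct and follows exactly the route the paper takes: the paper derives Theorem \ref{Thm10} by combining the tensor normal equations (\ref{Eq59}) with the identity $\mathbb{A}\boxdot^{(N+1)}\mathbb{A}=R^{T}R$ from Proposition \ref{p9}, which is precisely your two-step substitution. Nothing is missing.
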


\subsection{ A unified algorithm for TG-MPE and TG-RRE}
Using the global QR-decomposition of $\mathbb{D}^{n}_{\bar{d}-1}$ and $\mathbb{W}^{n}_{\bar{d}-1}$ and taking advantage of Proposition \ref{p9} and Theorem \ref{Thm10}, the implementation steps of TG-MPE and TG-RRE are presented  in the following algorithm.
\begin{algorithm} [H]
	\caption{Implementation of TG-MPE and TG-RRE}\label{TMPE}
	\begin{algorithmic}[1]
		\item[Inputs:] $n$, $\bar{d}$ and $\mathcal{X}_{n},\mathcal{X}_{n+1},\ldots,\mathcal{X}_{n+\bar{d}+1}$.
		\item[Outputs:] $\mathcal{T}_{n}^{\bar(d)}.$
		\begin{enumerate}
			\item  Compute the tensors $\mathcal{D}_{n},\mathcal{D}_{n+1},\ldots,\mathcal{D}_{n+\bar{d}}$ and form the tensor $\mathbb{D}^{n}_{\bar{d}-1}=\left[ \mathcal{D}_{n},\mathcal{D}_{n+1},\ldots,\mathcal{D}_{n+\bar{d}-1}\right].$
			\item  Compute the tensors $\mathcal{W}_{n},\mathcal{W}_{n+1},\ldots,\mathcal{W}_{n+\bar{d}}$ and form the tensor $\mathbb{W}^{n}_{\bar{d}-1}=\left[ \mathcal{W}_{n},\mathcal{W}_{n+1},\ldots,\mathcal{W}_{n+\bar{d}-1}\right] .$
			\item
			\begin{itemize}
				\item 
				TG-MPE,\begin{itemize}
					\item[3.1] Global-QR of $\mathbb{D}^{n}_{\bar{d}-1}$: $\mathbb{D}^{n}_{\bar{d}-1}=\mathbb{Q}\times_{(N+1)}R^{T}$, set $b=\mathbb{D}^{n}_{\bar{d}-1}\boxdot \mathcal{D}_{n+\bar{d}}$.
					\item[3.2] Solve  for $x=(x_{0},x_{1},...,x_{\bar{d} -1})$ the  linear system $R^{T}R\tilde{\theta^{k}}=-b.$\\
					\item[3.3] Compute $\delta^{n}=(\delta^{n}_0,\delta^{n}_1,\ldots,\delta^{n}_{\bar{d}})$ as in (\ref{Eq54}).
				\end{itemize}
				\item TG-RRE,\begin{itemize}
					\item[3.1] Global-QR of $\mathbb{W}^{n}_{\bar{d}-1}$ : $\mathbb{W}^{n}_{\bar{d}-1}=\mathbb{Q}\times_{(N+1)}R^{T}$, set $b=\mathbb{D}^{n}_{\bar{d}-1}\boxdot \mathcal{W}_{n}$.\\
					\item[3.2] Solve  for $x=(x_{0},x_{1},...,x_{\bar{d} -1})$ the  linear system $R^{T}R\tilde{\theta^{k}}=-b.$\\
					\item[3.3] Compute $\delta^{n}=(\delta^{n}_0,\delta^{n}_1,\ldots,\delta^{n}_{\bar{d}})$ as in (\ref{Eq55}).
				\end{itemize}
			\end{itemize}
			\item Compute the approximation  
			$\mathcal{T}_{n}^{\bar{d}}= \displaystyle \sum_{i=0}^{\bar{d}-1}\delta^{n}_{i} \mathcal{X}_{i}.$
		\end{enumerate}
	\end{algorithmic}
\end{algorithm}

\section{TG-MPE and TG-RRE as nonlinear Krylov subspace methods}
Recall that the $k^{th}$ tensor Krylov subspace $\mathcal{K}_{k}(\mathcal{M},\mathcal{V})$ associated to the pair of tensors $(\mathcal{M}, \mathcal{V})$   is defined by
$$\mathcal{K}_{k}(\mathcal{M},\mathcal{V})=span\left\lbrace \mathcal{V}, \mathcal{M} *_{N} \mathcal{V},\mathcal{M}^{2} *_{N} \mathcal{V} \ldots, \mathcal{M}^{(k-1)} *_{N} \mathcal{V}\right\rbrace.$$
As demostrated in \cite{17}, TG-MPE and TG-RRE, when applied to the generated linear sequence (\ref{Eq21}),
\begin{equation}\label{Seq}
	\mathcal{X}_{n+1}=\mathcal{G}(\mathcal{X}_{n})=\mathcal{M} *_{N} \mathcal{X}_{n}+\mathcal{B},
\end{equation}  
\noindent are mathematically equivalent to some well known Krylov subspace methods such as the tensor GMRES. Let $\mathbb{D}_{(n,d)},$ and $\mathbb{W}_{(n,d)}$ be  the spaces generated by differences $\left\lbrace \mathcal{D}_{n+k}\right\rbrace _{k=0}^{d}$ and $\left\lbrace \mathcal{W}_{n+k}\right\rbrace_{k=0}^{d}$, respectively where  $\mathcal{D}_{n+k}$ and $\mathcal{W}_{n+k}$,  are as in (\ref{Eq23}),
$$\mathbb{D}_{(n,d)}=span\left\lbrace \mathcal{D}_{n}, \mathcal{D}_{n+1}, \ldots, \mathcal{D}_{n+d}\right\rbrace \quad and 
\quad \mathbb{W}_{(n,d)}= span \left\lbrace\mathcal{W}_{n}, \mathcal{W}_{n+1}, \ldots, \mathcal{W}_{n+d}\right\rbrace.$$
\noindent The produced approximations $\mathcal{T}_{n}^{\bar{d}}$ in (\ref{Eq56}) are such that  the associated (generalized) residual $$\bar{\mathcal{R}}_{n} =\mathcal{T}_{n+1}^{\bar{d}}-\mathcal{T}_{n}^{(\bar{d})}=\mathcal{B}-(\mathcal{I}-\mathcal{M}) *_{N} \mathcal{T}_{n}^{(\bar{d})},$$ satisfies the relations
$$\begin{cases}\mathcal{T}_{n}^{(\bar{d})}-\mathcal{X}_{n}=\sum_{j=0}^{\bar{d}-1} \mu_{j}^{(n)} \mathcal{D}_{n+j}\in \mathbb{D}_{(n,\bar{d}-1)}  \\ \bar{\mathcal{R}}_{n} \perp \mathbb{V} \end{cases}
with \quad \mathbb{V}= \begin{cases}\mathbb{D}_{(n,\bar{d}-1)} & \text { for TG-MPE } \\ \mathbb{W}_{(n,\bar{d}-1)} & \text {for TG-RRE }. \end{cases}$$

\noindent  Following (\ref{Eq28}), $\mathbb{D}_{(n,\bar{d}-1)}$ and  $\mathbb{W}_{(n,\bar{d}-1)}$ have  Krylov structures given by 
$$\begin{cases} \mathbb{D}_{(n,\bar{d}-1)}=span\left\lbrace \mathcal{D}_{n}, \mathcal{M} *_{N} \mathcal{D}_{n},\mathcal{M}^{2} *_{N} \mathcal{D}_{n} \ldots, \mathcal{M}^{\bar{d}-1} *_{N} \mathcal{D}_{n}\right\rbrace=\mathcal{K}_{\bar{d}}(\mathcal{M},\mathcal{D}_{n}),\\
	\mathbb{W}_{(n,\bar{d}-1)}=\mathcal{M}*_{N}\mathbb{D}_{(n,\bar{d}-1)}=\mathcal{M}*_{N}\mathcal{K}_{\bar{d}}(\mathcal{M},\mathcal{D}_{n}).
\end{cases}$$

\noindent Taking into account this connection, one can investigate an equivalent implementation  based on the tensor orthogonalization process of Arnoldi. Algorithm \ref{Ar} highlights the steps of this implementation.

\begin{algorithm}.
	\caption{Arnoldi based Implementation of TG-MPE and TG-RRE}\label{Ar}
	\begin{algorithmic}[h]
		\item[Inputs:] $n$, $\bar{d}$ and $\mathcal{X}_{n},\mathcal{X}_{n+1},\ldots,\mathcal{X}_{n+\bar{d}+1}$.
		\item[Outputs:]  $\mathcal{T}_{n}^{\bar(d)}.$\begin{enumerate}

			\item  Compute the tensors $\mathcal{D}_{n},\mathcal{D}_{n+1},\ldots,\mathcal{D}_{n+\bar{d}}$.
			
			\item  Compute  $\beta = \parallel \mathcal{D}_{n}\parallel_{F}$ and $\mathcal{V}_{n}=\mathcal{D}_{n}/\beta.$
			
			\item For {$k=1,...,\bar{d}$}\begin{itemize}

				\item Set $\mathcal{Y}=\mathcal{D}_{n+k}. $ 
				
				\item Orthogonalize $\mathcal{Y}$ with respect to $\left\lbrace \mathcal{V}_{n},\mathcal{V}_{n+1},...,\mathcal{V}_{n+k-1} \right\rbrace$ (=$\mathcal{K}_{k}(\mathcal{M},\mathcal{V}_{n})$ in linear case).
				
				\item For {$j=1,...,k$} 
				\begin{itemize}
					\item  $h_{j,k}=\langle\mathcal{V}_{j},\mathcal{Y}\rangle.$
					\item $\mathcal{Y}=  \mathcal{Y}-h_{j,k}\mathcal{V}_{j}.$
				\end{itemize}
				
				\item EndFor
				
				\item $h_{k+1,k}=\langle\mathcal{Y},\mathcal{Y}\rangle^{\frac{1}{2}}$, $\mathcal{V}_{n+k}=\mathcal{V}/h_{k+1,k}.$

			\end{itemize}
			\item EndFor
			\item Set  $H_{\bar{d}}=\left[ h_{ij} \right]_{1\leq i,j\leq \bar{d}} \in\mathbb{R}^{\bar{d}\times \bar{d}}$ and $\hat{H}_{\bar{d}}=\left[h_{ij} \right]_{1\leq i\leq (\bar{d}+1), 1 \leq j\leq \bar{d}}\in\mathbb{R}^{(\bar{d}+1)\times \bar{d}}$.
			
			\item Determine $\mu^{(n)}=(\mu^{(n)}_{0},\mu^{(n)}_{1},...,\mu^{(n)}_{\bar{d} -1})$ via
			\begin{itemize}
				\item TG-MPE:
				$\mu^{(n)}=(H_{\bar{d}})^{-1}b\quad with \quad b=\beta e_{1}.$
				
				\item TG-RRE: $\mu^{(n)}= \underset{\mu}{argmin} \parallel(\hat{H}_{\bar{d}})\mu-\beta e_{1}\parallel.$
				
			\end{itemize}
			
			\item Compute $\mathcal{T}_{n}^{(\bar{d})}=\mathcal{X}_{n}+\sum_{j=0}^{\bar{d}-1} \mu^{(n)}_{j} \mathcal{V}_{n+j}.$
		\end{enumerate}
	\end{algorithmic}
\end{algorithm}

\noindent An interesting observation here is that Algorithm \ref{Ar} depends solely on the tensor terms 
$\mathcal{D}_{n}, \mathcal{D}_{n+1}, \ldots, \mathcal{D}_{n+\bar{d}}$ 
and is independent of how the sequence $\lbrace  \mathcal{X}_{n}\rbrace $ is
generated, whether it is linear or nonlinear.  As a direct consequence, tensor extrapolation methods, when applied to nonlinear sequences , can be considered as nonlinear adaptations of tensor Krylov subspace methods since they employ the same orthogonalization processes. Moreover, this allow us to use the advanced techniques employed in tensor Krylov subspace methods  to improve tensor extrapolation methods. It also suggests that any tensor  Krylov type method has a tensor extrapolation method counterpart, and conversely. 

\section{Numerical experiments}
In this section, we present some  numerical examples to show  the effectiveness of TG-MPE and TG-RRE algorithms.  The first example contains two tests comparing TG-MPE and TG-RRE  with  the tensor biconjugate gradient decent BiCR2 presented in \cite{18}. The second example is devoted to some experiments on completion problems. The last one deals with a nonlinear sequence. All computations were done  with  MATLAB2021a on a
PC HP with 4 GHz  and 16 GB RAM. We stop the computations once the relative error $\dfrac{\|\mathcal{X}_k-\mathcal{X}_{exact}\|_{F}}{\|\mathcal{X}_{exact}\|_{F}} $ is less than $10^{-14} $. The maximum iteration number is set to be $400$ if not specified.

\subsection{Example 1} In this example, we apply TG-MPE and TG-RRE on the extended BiCR2 method  presented in \cite{18} for solving the general based Einstein-product equation of  the form 
\begin{equation}\label{eq1}
	\mathcal{A}\ast_M\mathcal{X}\ast_N\mathcal{B}=\mathcal{C}.
\end{equation}
As BiCR2  is a conjugate gradient-like method, its convergence  is in general slow.  Throughout the two tests below, we show how TG-MPE and TG-RRE can improve its convergence rate. For measuring the accuracy of  the underlying methods, i.e, BiCR2, BiCR2+TG-MPE and BiCR2+TG-RRE,  we adopt the  relative error and the relative residual,
$$ Er=\dfrac{\|\mathcal{X}_k-\mathcal{X}_{exact}\|_{F}}{\|\mathcal{X}_{exact}\|_{F}}\quad\text{and}\quad Re=\dfrac{\|\mathcal{A}\ast_M\mathcal{X}_k\ast_N\mathcal{B}-\mathcal{C}\|_{F}}{\|\mathcal{C}\|_{F}},$$
where $\mathcal{X}_{exact}$ is the exact solution.

\subsubsection{Experiment  1} 
Consider  the tensor equation \begin{equation}\label{Eq65}
	\mathcal{A}\ast_2\mathcal{X}=\mathcal{C},
\end{equation}
with $\mathcal{A}  \in \mathbb{R}^{7\times 7 \times 7\times 7}$ is a random tensor 
and the right hand side tensor $\mathcal{C}$ 
is computed via (\ref{Eq65}) by considering
the exact solution $\mathcal{X}_{exact} =  \in \mathbb{R}^{7\times 7 \times 7\times 7}$ with all elements equal to one and the initial guess is set to the zero tensor.

\noindent Figure \ref{Fig1}  illustrates the behaviour of the general error norm  $Er$ and  the residual norm  $Re$ versus the iteration number, while Figure \ref{Fig2}  shows their behaviours versus  the CPU time. The curves  reveal clearly the acceleration effect of TG-MPE and TG-RRE on BiCR2 in terms of the number of iterations number  and the required computational time.

\begin{figure}[h]
	\centering
	
	\includegraphics[width=0.4\textwidth]{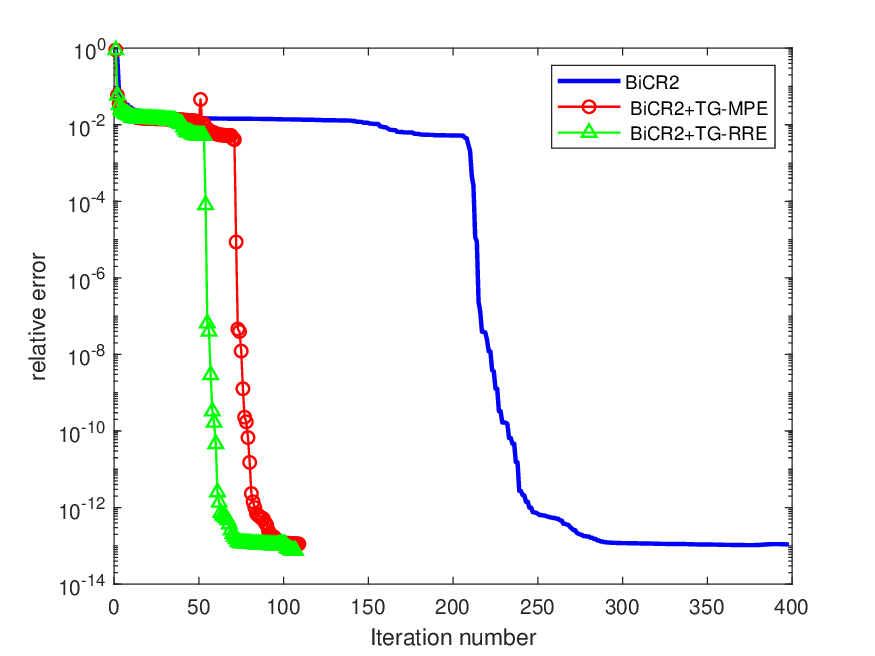}
	\includegraphics[width=0.4\textwidth]{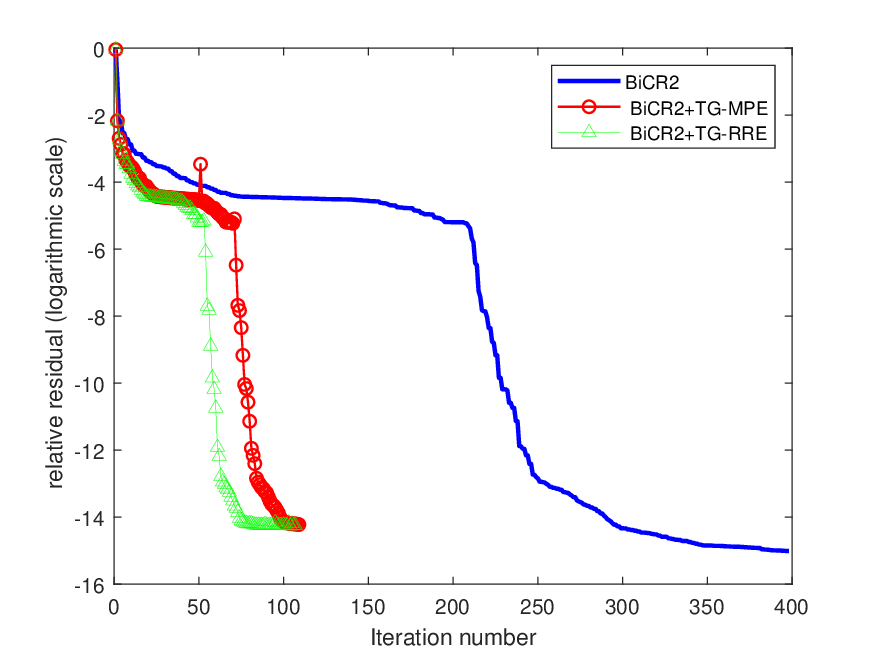}
	
	\caption{Experiment 1: The relative error(left) and the relative residual (right) versus iteration number.}
	\label{Fig1}
\end{figure}
\begin{figure}[h] 
	\centering
	
	\includegraphics[width=0.4\textwidth]{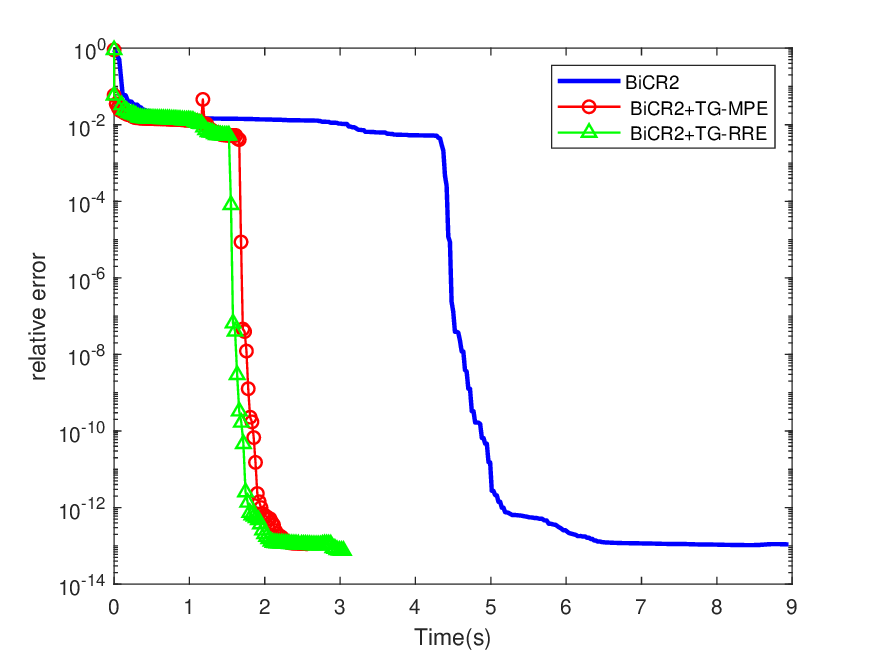}
	\includegraphics[width=0.4\textwidth]{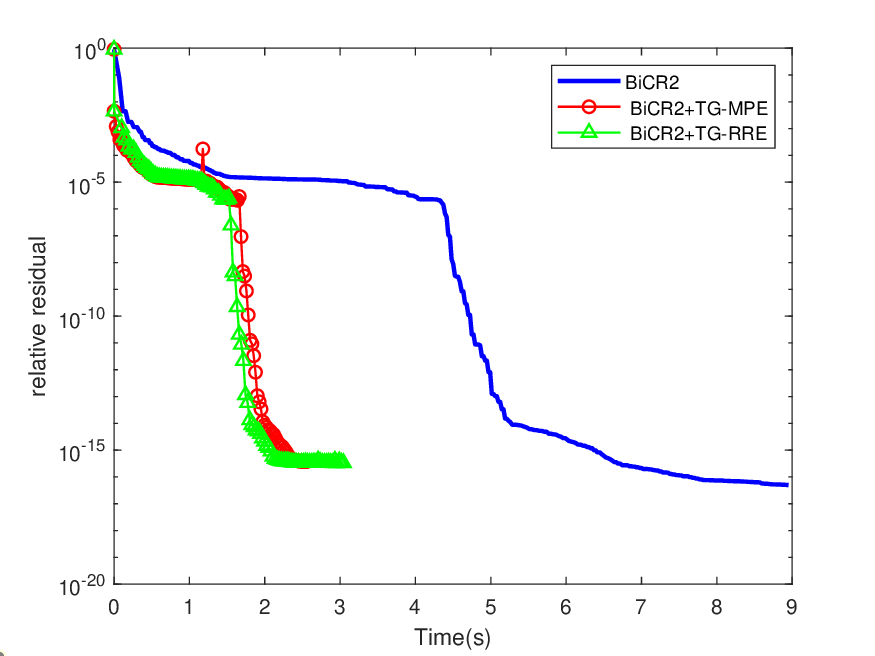}    
	\caption{Experiment 1: The   relative error (left) and the relative residual  (right) versus the required CPU time.}
	\label{Fig2}
\end{figure}

\newpage
\subsubsection{Experiment 2} 
For this second experiment,  we consider the more general tensor equation \begin{equation}\label{Eq66}
	\mathcal{A}\ast_M\mathcal{X}\ast_N\mathcal{B}=\mathcal{C}.
\end{equation}
where  $\mathcal{A}$ and $\mathcal{B}$ are arbitrary random tensors,
$\mathcal{A} = tenrand(7, 7, 6, 5) $ and $\mathcal{B} = tenrand(4, 4, 7, 7)$. 
The right hand side tensor $\mathcal{C}$ 
is computed via (\ref{Eq66}) such that the
exact solution is as $\mathcal{X}_{exact} = tensor(ones(7, 7, 4, 4)).$
We start again by the zeros tensor $\mathcal{X}_0 $. As in experiment $1$,  we plot  error $Er$  and residual   $Re$ versus the number of  iterations (Figure \ref{Fig3}) and CPU time (Figure \ref{Fig4}). 

\begin{figure}[h]
	\centering
	
	\includegraphics[width=0.4\textwidth]{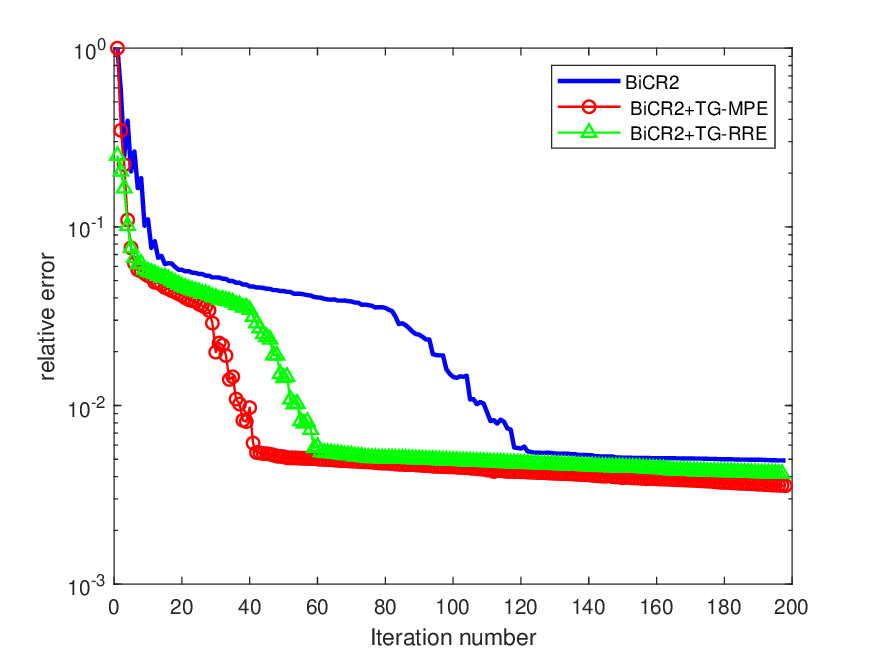}
	\includegraphics[width=0.4\textwidth]{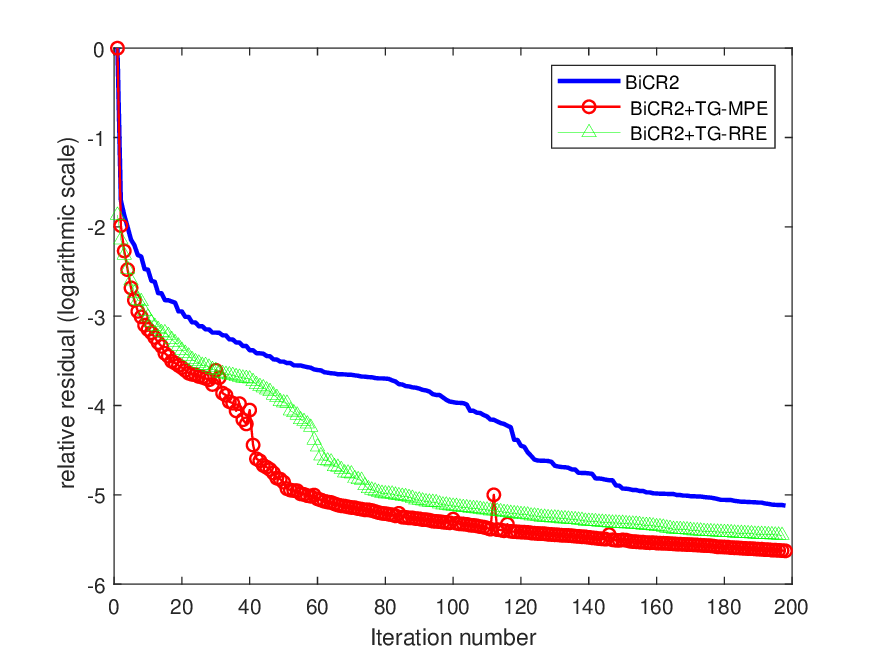}
	
	\caption{Experiment 2: The  relative error (left) and the relative residual (right) versus the  iteration number.}
	\label{Fig3}
\end{figure}
\begin{figure}[h] 
	\centering
	
	\includegraphics[width=0.4\textwidth]{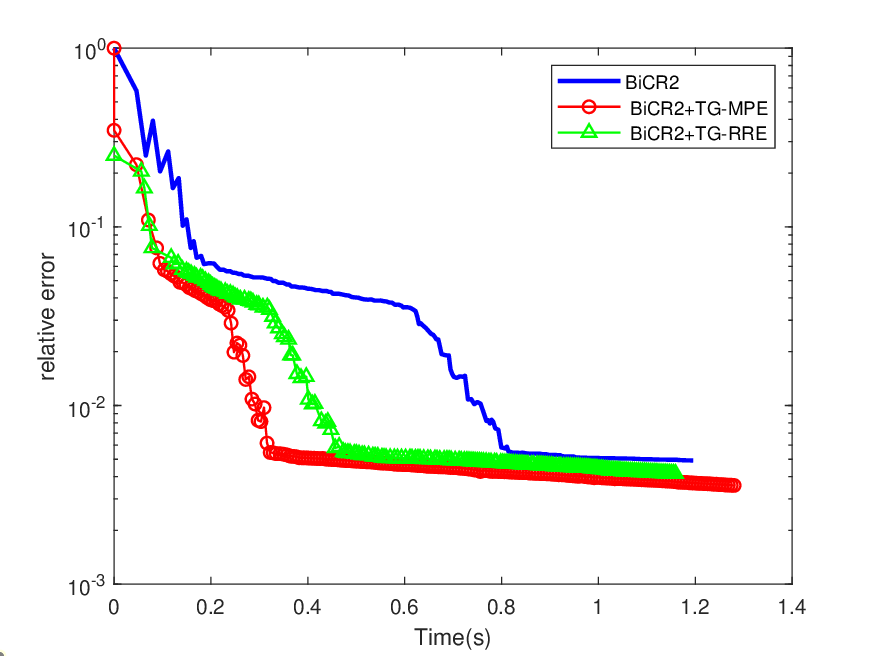}
	\includegraphics[width=0.4\textwidth]{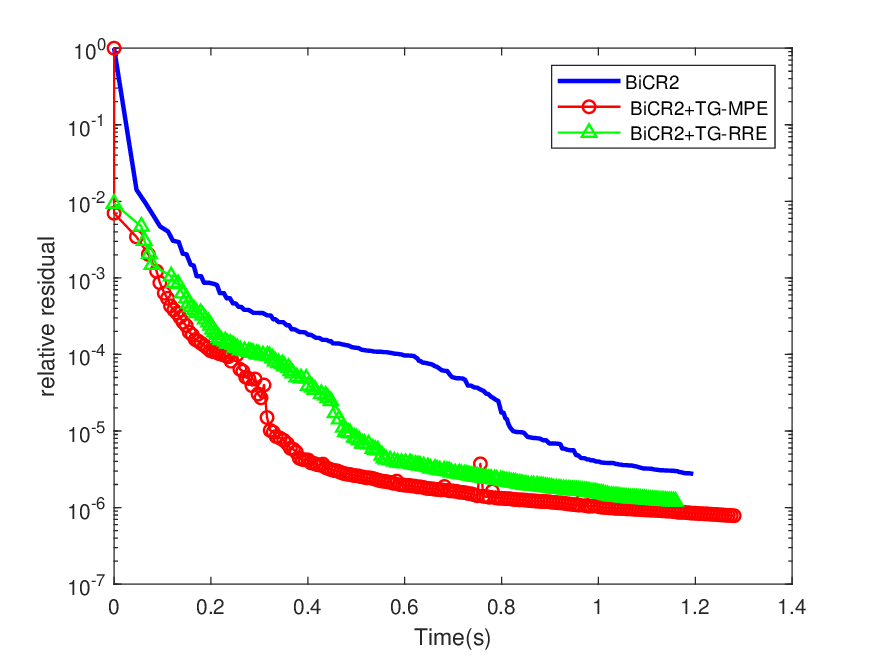}
	
	\caption{Experiment 2: The  relative error (left) and the relative residual (right) versus the required CPU time. }
	\label{Fig4}
\end{figure}
\noindent The obtained results confirm the accuracy of TG-MPE and TG-RRE. Furthermore, the curves reveal that TG-RRE is more stable than TG-MPE.

\subsection{Example 2} This example is devoted to the completion problem  that aims   the reconstruction of a low-rank symmetric tensor from its incomplete (or  randomly corrupted) observed entries (\cite{comp1,comp2,comp4,comp3}). The task is  to estimate a symmetric order-three tensor $\bar{\mathcal{V}} \in \mathbb{R}^{N\times N \times N}$ from an available  noisy and incomplete tensor $\mathcal{V} \in \mathbb{R}^{N\times N \times N} $
whose entries  are as follows
\begin{equation}\label{Eq67} \mathcal{V}(i,j,k)=\left\lbrace
	\begin{aligned}
		&\bar{\mathcal{V}}(i,j,k)+\mathcal{B}(i,j,k), & (i,j,k) \in \Omega , \\
		0	& \hspace{3cm} &otherwise,\\			
	\end{aligned}
	\right.\end{equation}
where $\mathcal{B}(i,j,k)$ is the noise  at location $(i,j,k)$, and $\Omega\subseteq \{1,2,...,N\}^{3}$ is the index subset of the observed entries. The tensor $\bar{\mathcal{V}}$ is such that
\begin{equation}\label{Eq68}
	\bar{\mathcal{V}}=\sum_{k=1}^{r}\bar{v}_{k}^{\otimes 3},
\end{equation} 
where $\bar{v}_{k}^{\otimes 3}=  \bar{v}_{k}\otimes\bar{v}_{k}\otimes\bar{v}_{k}$ with $\otimes$ stands for the outer product and $\bar{v}_1,...,\bar{v}_r \in \mathbb{R}^{N}$.

\noindent For estimating the  unknown vectors $\bar{v}_1,...,\bar{v}_2$, the authors in \cite{comp2} adopt the following minimization problem 
\begin{equation}\label{Eq69}
	\underset{V\in \mathbb{R}^{N\times r}}{min}~\{ g(V)= \parallel Proj_{\Omega}(\sum_{k=1}^{r}v_{k}^{\otimes 3}-\mathcal{V})\parallel_{F}^{2} \},	
\end{equation}
where $V=\left[v_{1},...,v_r \right] \in  \mathbb{R}^{N\times r}$ and $Proj_{\Omega}(\mathcal{T})$ is the orthogonal projection of the tensor $\mathcal{T}$ onto the subspace of tensors   vanishing outside of $\Omega$.
For solving  (\ref{Eq69}) the authors in (\cite{comp1}) proposed a  gradient decent  based algorithm with spectral initialisation (referred here by \textbf{ SpecGD}) which convergence is within nearly linear time.(see \cite{comp1} for more details).
\noindent We generate randomly  data $\bar{\mathcal{V}}$, $\bar{\mathcal{B}}$ and $\Omega$, and compute the corrupted (incomplete) tensor $\mathcal{V}$ via (\ref{Eq67}). In Figures \ref{Fig5},    \ref{Fig6} and \ref{Fig7}  we plotted, for the $first 100$ iterations, the obtained results  for different dimensions $N= 15, 30$ and $50$.\\
The plots  on  the left of each figure show the behaviour of  relative error 
$Er=\frac{\parallel \mathcal{V}_k -\bar{\mathcal{V}}\parallel_{F}}{\parallel\bar{\mathcal{V}}\parallel_{F}}$
For SpecGD, SpecGD+TG-MPE and SpecGD+TG-RRE  while the plots on the    right  illustrate relative error versus the required CPU time (in seconds). The figures show that the convergence rate of  SpecGD is improved when apply on it the extrapolation methods TG-MPE and TG-RRE. For $N=15$, we observe that, for the first $25$ iterations, TG-MPE is faster than TG-MPE. For $N=30$ and $N=50$    we observe that TG-RRE is more accurate. That suggests that TG-RRE is more appropriate when dimensions become larger.

\begin{figure}[H]
	\centering
	
	\includegraphics[width=0.45\textwidth]{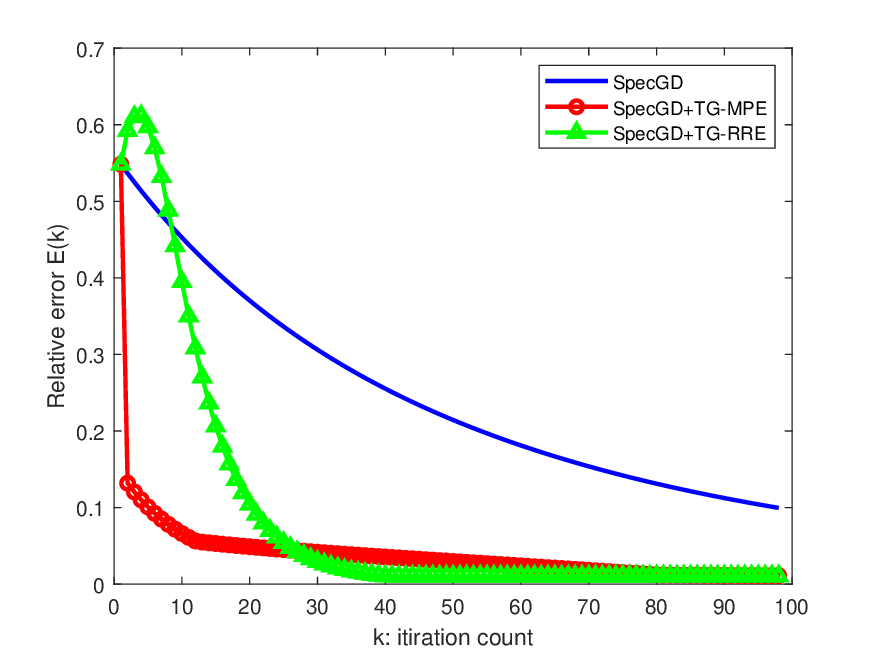}
	\includegraphics[width=0.45\textwidth]{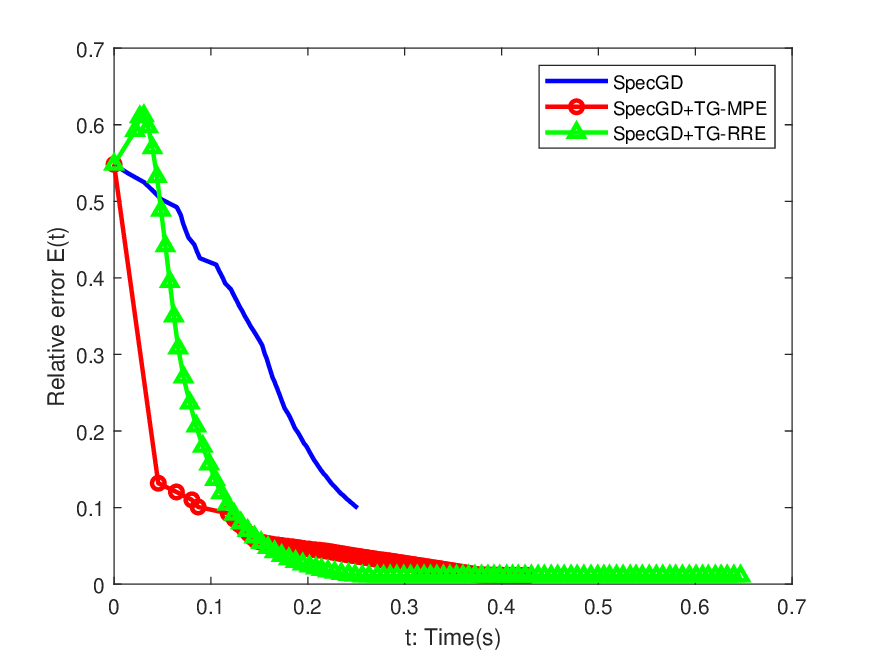}
	
	\caption{The relative error versus iteration number  (left) and CPU time (right) for $N=15$. }
	\label{Fig5}
\end{figure}
\begin{figure}[H] 
	\centering
	\includegraphics[width=0.45\textwidth]{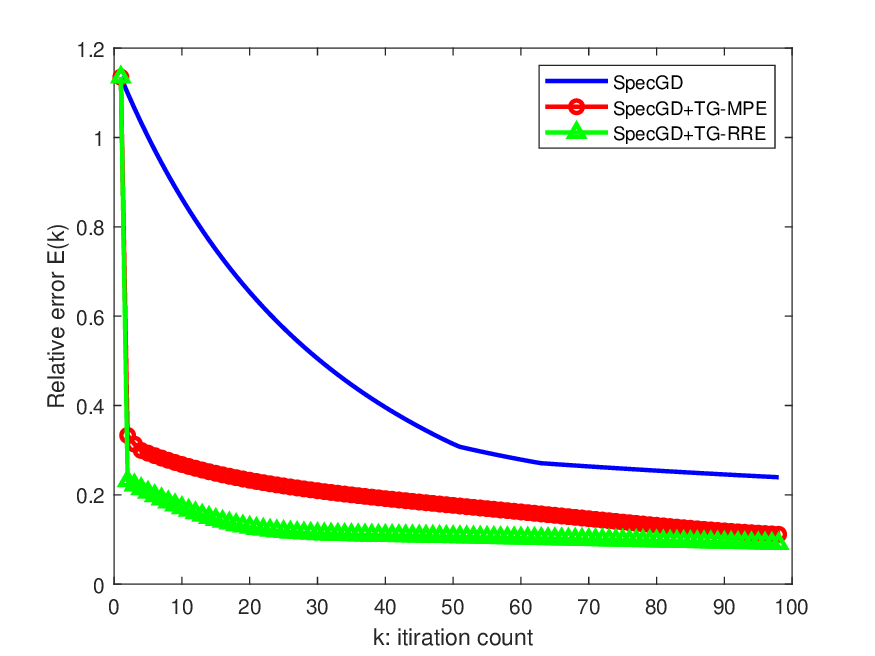}
	\includegraphics[width=0.45\textwidth]{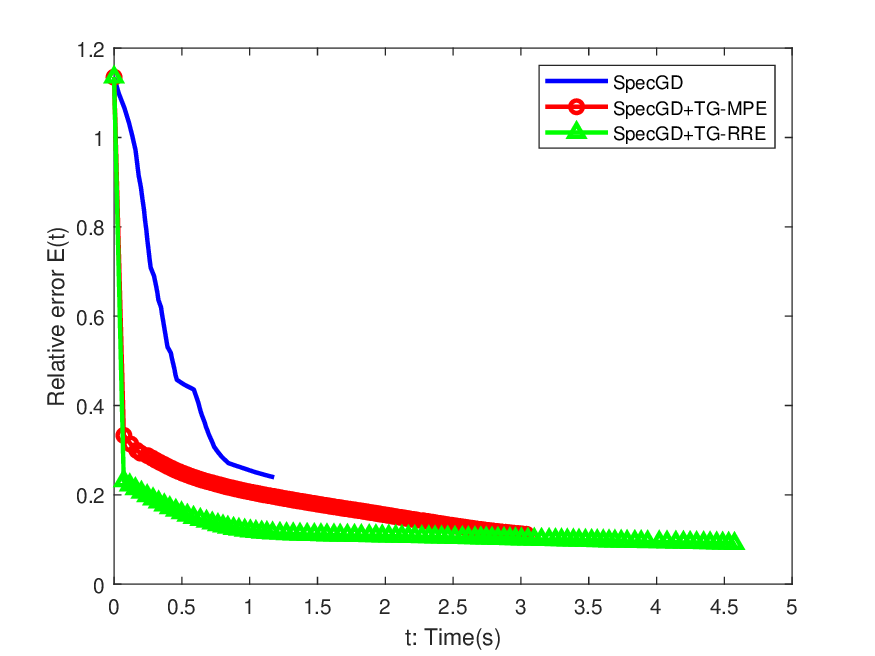}
	
	\caption{The relative error versus iteration number  (left) and CPU time (right) for $N=30$. }
	\label{Fig6}
\end{figure}

\begin{figure}[H] 
	\centering
	\includegraphics[width=0.45\textwidth]{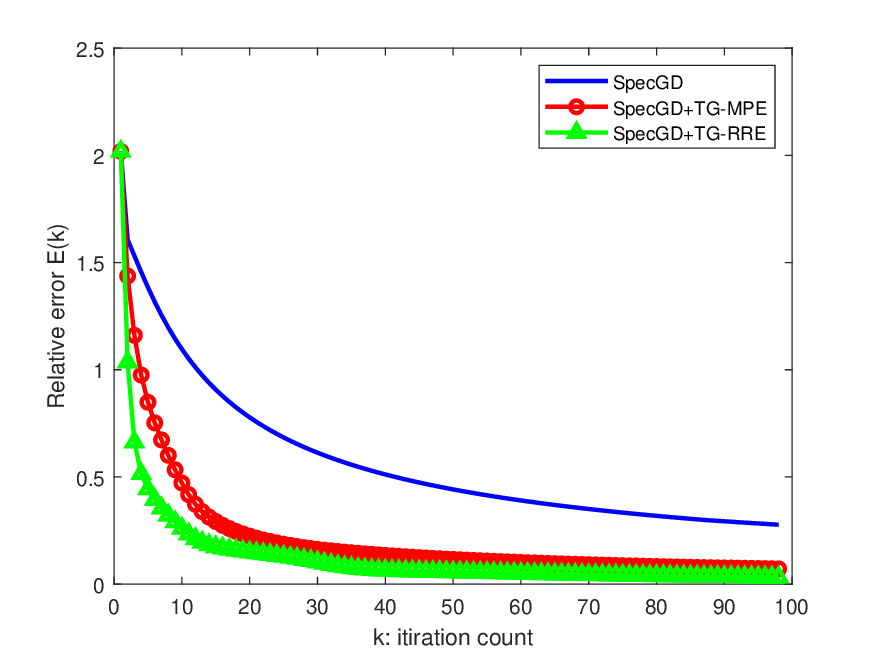}
	\includegraphics[width=0.45\textwidth]{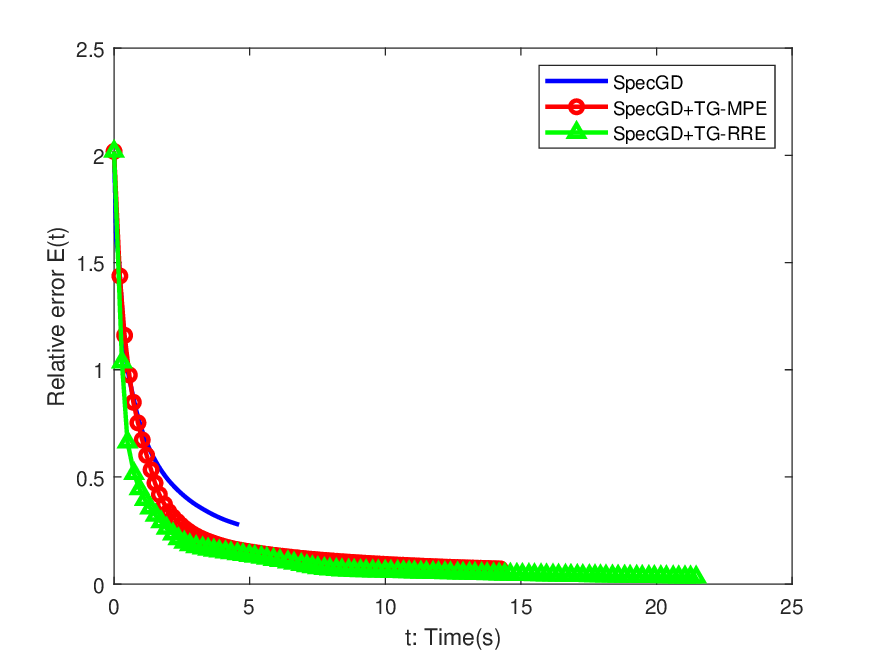}
	
	\caption{The relative error versus iteration number  (left) and CPU time (right) for $N=50$.}
	\label{Fig7}
\end{figure}

	
	\subsection{Example 3}
	We  consider here a nonlinear  sequence $\left\lbrace \mathcal{X}_n \right\rbrace \in \mathbb{R}^{N\times N\times N}$ that is generated by the relation 
	\begin{equation}
		\mathcal{X}_{n+1}= sin\left((1+\frac{1}{(n+1)^2})\mathcal{X}_n)\right), 
	\end{equation}
	\noindent where the initial guess $\mathcal{X}_0$ is a random  nonzero tensor. It is obvious that the limit of this sequence is the zero tensor. We then consider the relative error $Er=\frac{\parallel\mathcal{X}^{method}_n\parallel}{\parallel\mathcal{X}_0\parallel}$ to measure the accuracy of our methods. We stop computations when iteration number $n$ reach $200$.
	
	\noindent Figure \ref{Fignon} shows the relative error  behaviour of the basic iterations (without extrapolation), TG-MPE and TG-RRE for different dimensions $N=5$ (left) and $N=10$ (right).
	\noindent The obtained results suggests that the tensor extrapolation methods, TG-MPE and TG-RRE are  useful tools for accelerating the convergence of some nonlinear sequences. We can also see the superiority of TG-RRE against the TG-MPE.  
	
	\begin{figure}[H] 
		\centering
		\includegraphics[width=0.45\textwidth]{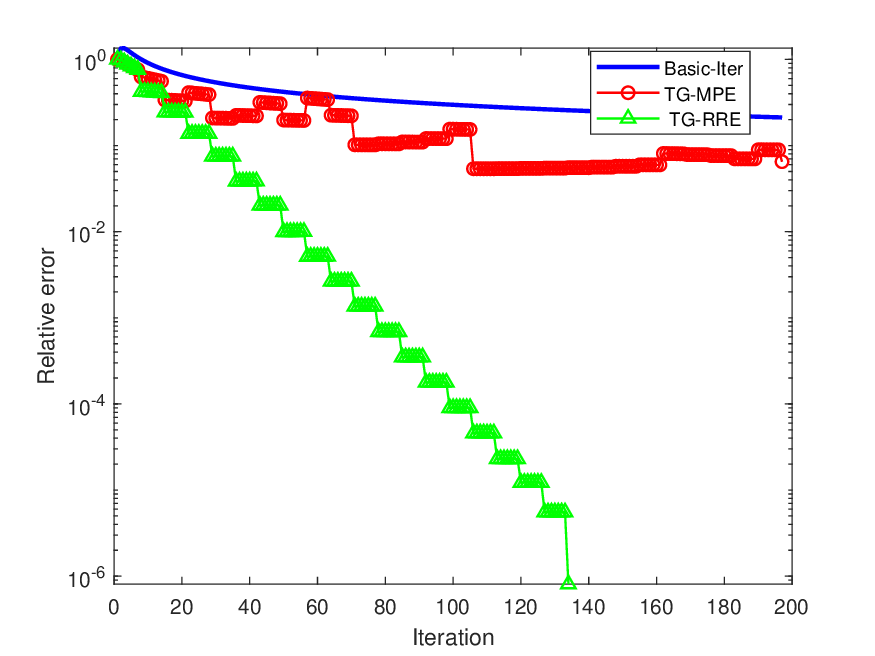}
		\includegraphics[width=0.45\textwidth]{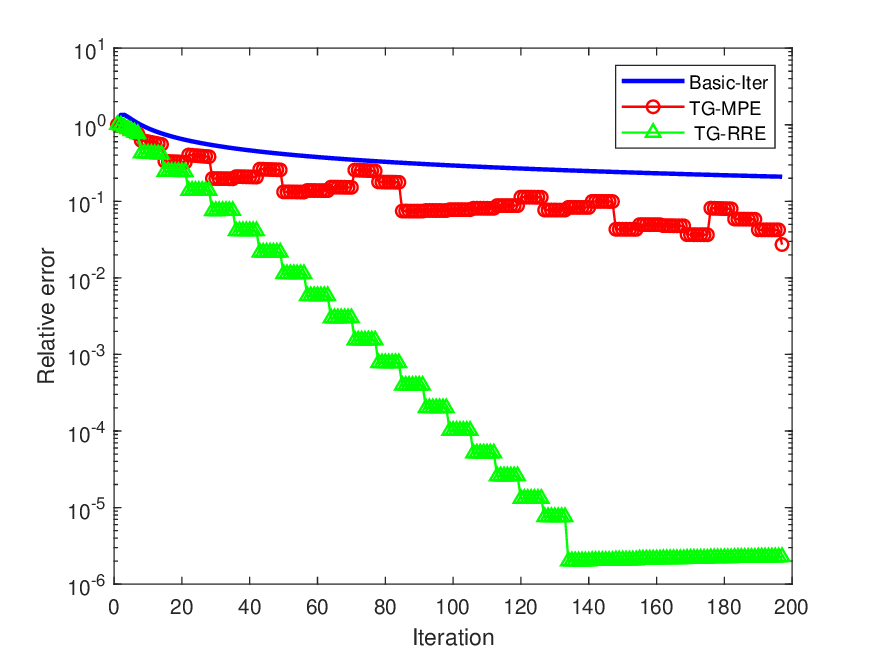}
		\caption{The relative error versus the iteration number for different dimensions $ N=5$  (left) and $N=10$ (right) ( Example 3).}
		\label{Fignon}
	\end{figure}

	\section{Conclusion}
	 In this paper, we establish the tensor extrapolation method TG-MPE and TG-RRE derived throughout the framework of linear tensor equations in the form $\mathcal{A}*_N \mathcal{B} =\mathcal{C}$,  The followed approach allows us to demonstrate  the connection with polynomial methods as well as justify how these methods can be thought nonlinear Krylov subspace methods when applied to nonlinear problems. To confirm the effectiveness of the TG-MPE and TG-RRE, we have applied them to  some linear and nonlinear generated sequences. The obtained results confirm the feasibility and performance of the methods.
\section*{Acknowledgements:}

\noindent The work was partially supported by the Moroccan Ministry of Higher Education, Scientific Research and Innovation and the OCP Foundation through the APRD research program.

\bibliography{mybibfile}

\end{document}